\newtheorem{Theorem}{Theorem}[section]
\newtheorem{lemma}[Theorem]{Lemma}
\newtheorem{proposition}[Theorem]{Proposition}
\newtheorem{corollary}[Theorem]{Corollary}
\newtheorem{definition}[Theorem]{Definition}
\newtheorem{remark}[Theorem]{Remark}
\newtheorem{example}[Theorem]{Example}
\newcommand{\RR}{\mathbb {R}}
\newcommand{\bi}{\widehat{\mathcal{B}}_{L}}
\long\def\dow#1{\leavevmode\setbox\@tempboxa\hbox{$#1$}
\@tempdima\fboxrule
\advance\@tempdima \fboxsep \advance\@tempdima \dp\@tempboxa
\hbox{\hskip\fboxsep \lower \@tempdima\hbox
{\vbox{
\hbox{ 
\vbox{\vskip\fboxsep \box\@tempboxa\vskip\fboxsep}\hskip
\fboxsep\vrule \@width \fboxrule}%
\hrule \@height \fboxrule}}}\hskip\fboxsep }
\long\def\dol#1{\leavevmode\setbox\@tempboxa\hbox{$#1$}
\@tempdima\fboxrule
\advance\@tempdima \fboxsep \advance\@tempdima \dp\@tempboxa
\hbox{\hskip\fboxsep \lower \@tempdima\hbox
{\vbox{
\hbox{\vrule \@width \fboxrule 
\hskip \fboxsep \vbox{\vskip\fboxsep \box\@tempboxa\vskip\fboxsep }\hskip\fboxsep }%
\hrule \@height \fboxrule}}} \hskip\fboxsep }
\long\def\up#1{\leavevmode\setbox\@tempboxa\hbox{$#1$}
\@tempdima\fboxrule
\advance\@tempdima \fboxsep \advance\@tempdima \dp\@tempboxa
\hbox{\hskip\fboxsep \lower \@tempdima\hbox
{\vbox{\hrule \@height \fboxrule 
\hbox{ 
\vbox{\vskip\fboxsep \box\@tempboxa\vskip\fboxsep}\hskip
\fboxsep\vrule \@width \fboxrule}%
}}}\hskip\fboxsep }
\long\def\upl#1{\leavevmode\setbox\@tempboxa\hbox{$#1$}
\@tempdima\fboxrule
\advance\@tempdima \fboxsep \advance\@tempdima \dp\@tempboxa
\hbox{\hskip\fboxsep \lower \@tempdima\hbox
{\vbox{\hrule \@height \fboxrule
\hbox{\vrule \@width \fboxrule 
\hskip \fboxsep \vbox{\vskip\fboxsep \box\@tempboxa\vskip\fboxsep }\hskip\fboxsep }%
}}}\hskip\fboxsep }
\begin{document}
\title{The topological biquandle of a link}

\author{Eva Horvat}
\address{University of Ljubljana\\
Faculty of Education\\
Kardeljeva plo\v s\v cad 16\\
1000 Ljubljana, Slovenia}
\email{eva.horvat@pef.uni-lj.si}

\keywords{biquandle, quandle, fundamental biquandle, topological biquandle.}
%\subjclass[2010]{57M25 (primary), 57M05 (secondary).}
\date{\today}
\maketitle

\begin{abstract}
To every oriented link $L$, we associate a topologically defined biquandle $\bi $, which we call the topological biquandle of $L$. The construction of $\bi $ is similar to the topological description of the fundamental quandle given by Matveev. We find a presentation of the topological biquandle and explain how it is related to the fundamental biquandle of the link. 
\end{abstract}

\begin{section}{Introduction}
A biquandle is an algebraic structure with two operations that generalizes a quandle. The axioms of both structures represent an algebraic encoding of the Reidemeister moves, and study of quandles and related structures has been closely intertwined with knot theory. 

It is well known that every knot has a fundamental quandle, that admits an algebraic as well as a topological interpretation. Its topological description is due to Matveev \cite{MA}, who called it \textit{the geometric grupoid} of a knot and proved that the fundamental quandle is a complete knot invariant up to inversion (taking the mirror image and reversing orientation). 

The fundamental biquandle of a knot or link, however, is purely algebraically defined. It is not clear whether it also admits a topological interpretation \cite{KA}. Various other issues concerning biquandles have not yet been resolved, see \cite{UN}. 
 
To any classical oriented link, we associate a topologically defined biquandle $\bi $, which we call the topological biquandle of the link. Our construction is similar to Matveev's construction of the geometric grupoid of a knot. The topological construction enables us to visualize the biquandle operations directly and improves our understanding of the biquandle structure. Another advantage of this construction is that it defines a functor from the (topological) category of oriented links in $S^{3}$ to the category of biquandles. 

We show that the topological biquandle $\bi $ is a quotient of the fundamental biquandle, but its structure is simpler than that of a general biquandle.

This paper is organized as follows. In Section \ref{sec1}, we give the definition of a biquandle, recall some of its basic properties, define biquandle presentations and the fundamental biquandle of a link. Section \ref{sec2} is the core of the paper, in which we define the topological biquandle of a link, prove that it is a biquandle and study some of its properties. In Section \ref{sec3}, we investigate the topological biquandle from the perspective of a link diagram. We find a presentation of the topological biquandle and show that is is a quotient of the fundamental biquandle.  
\end{section}

\begin{section}{Preliminaries}
\label{sec1}
For an introduction to biquandles, we refer the reader to \cite{KAUF1}, \cite{KAUF2}, \cite{KAUF3}.
\begin{definition}[Biquandle axioms] \label{def1}
A \textbf{biquandle} is a set $B$ with two binary operations, the up operation $\up{\vbox to 0.2cm {\, }}$ and the down operation $\dow{\vbox to 0.2cm {\, }}$, such that $B$ is closed under these operations and that the following axioms are satisfied: 
\begin{enumerate}
\item For every $a\in B$, the maps $f_{a},g_{a}\colon B\to B$ and $S\colon B\times B\to B\times B$, defined by $f_{a}(x)=x\up{a}$, $g_{a}(x)=x\dow{a}$ and $S(x,y)=(y\dow{x},x\up{y})$, are bijections.
\item For every $a\in B$, we have $f_{a}^{-1}(a)=a\dow{f_{a}^{-1}(a)}$ and $g_{a}^{-1}(a)=a\up{g_{a}^{-1}(a)}$. 
\item For every $a,b,c\in B$, the equalities \begin{xalignat*}{2}
& \textrm{(Up Interchanges)} & a\up{b}\up{c}=a\up{c\dow{b}}\up{b\up{c}}\\
& \textrm{(Rule of Five)} & a\dow{b}\up{c\dow{b\up{a}}}=a\up{c}\dow{b\up{c\dow{a}}}\\
& \textrm{(Down Interchanges)} & a\dow{b}\dow{c}=a\dow{c\up{b}}\dow{b\dow{c}}
\end{xalignat*} are valid. 
\end{enumerate}
\end{definition}
A biquandle $(B,\up{\vbox to 0.2cm {\, }},\dow{\vbox to 0.2cm {\, }})$ in which $a\dow{b}=a$ for all $a\in B$ is called a \textbf{quandle}. \\

It follows from the first biquandle axiom that the map $S\colon B\times B\to B\times B$ has an inverse. Define two new operations $\upl{\vbox to 0.2cm {\, }}$ and $\dol{\vbox to 0.2cm {\, }}$ on $B$ by $$S^{-1}(a,b)=(b\upl{a},a\dol{b})\;.$$

\begin{remark} This ''corner'' notation was introduced by Kauffman \cite{KAUF1}. Another alternative is the ''exponential notation'' that was used by Fenn and Rourke in \cite{FR}, and avoids brackets. One may translate between the two notations using equalities: $a^{b}=a\up b$, $a^{\overline{b}}=a\upl b$, $a_{b}=a\dow b$ and $a_{\overline{b}}=a\dol b$. 
\end{remark}

\begin{lemma}\label{lemma0} For every $a,b\in B$, the equalities $$a\up{b}\upl{b\dow{a}}=a\upl{b}\up{b\dol{a}}=a\dow{b}\dol{b\up{a}}=a\dol{b}\dow{b\upl{a}}=a$$ are valid. 
\end{lemma}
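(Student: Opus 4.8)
The plan is to obtain all four identities as nothing more than a restatement of the invertibility of $S$, so that the only biquandle axiom used is the first one. By that axiom $S\colon B\times B\to B\times B$ is a bijection, hence $S^{-1}$ is defined on all of $B\times B$; writing its two coordinates as in the displayed formula $S^{-1}(a,b)=(b\upl{a},a\dol{b})$ exhibits $\upl{\,}$ and $\dol{\,}$ as honest binary operations on $B$, namely $b\upl{a}$ is the first coordinate of $S^{-1}(a,b)$ and $a\dol{b}$ is the second. With the operations in hand, the proof consists of spelling out the two relations $S^{-1}\circ S=\mathrm{id}$ and $S\circ S^{-1}=\mathrm{id}$ coordinatewise.

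First I would apply $S^{-1}$ to $S(x,y)=(y\dow{x},x\up{y})$. Using the definition of $S^{-1}$ with first argument $y\dow{x}$ and second argument $x\up{y}$, this equals $\bigl(x\up{y}\upl{y\dow{x}},\,y\dow{x}\dol{x\up{y}}\bigr)$, and since $S^{-1}\circ S=\mathrm{id}$ it must equal $(x,y)$. Comparing first coordinates gives $x\up{y}\upl{y\dow{x}}=x$, which after renaming $x\mapsto a$, $y\mapsto b$ is the first claimed equality; comparing second coordinates gives $y\dow{x}\dol{x\up{y}}=y$, which after renaming $y\mapsto a$, $x\mapsto b$ is the third claimed equality.

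Symmetrically, I would apply $S$ to $S^{-1}(a,b)=(b\upl{a},a\dol{b})$. Using the definition of $S$ with first argument $b\upl{a}$ and second argument $a\dol{b}$, this equals $\bigl(a\dol{b}\dow{b\upl{a}},\,b\upl{a}\up{a\dol{b}}\bigr)$, and by $S\circ S^{-1}=\mathrm{id}$ it equals $(a,b)$. The first coordinate is exactly the fourth claimed equality, and the second coordinate, $b\upl{a}\up{a\dol{b}}=b$, becomes the second claimed equality after interchanging the names of $a$ and $b$. Since all four expressions are thereby shown to equal $a$, they also coincide with one another. I do not anticipate any real difficulty here: once the well-definedness of $\upl{\,}$ and $\dol{\,}$ is noted, everything is forced, and the only thing to watch is the bookkeeping of the relabellings that turn the four coordinate identities into the four equalities as stated.
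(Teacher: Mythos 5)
Your proposal is correct and follows exactly the same route as the paper's proof: compute $S^{-1}\circ S$ and $S\circ S^{-1}$ coordinatewise and read off the four identities, with the only addition being the (harmless and accurate) explicit relabellings. Nothing further is needed.
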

\begin{proof}We compute
\begin{xalignat*}{1}
& (a,b)=S^{-1}\left (S(a,b)\right )=S^{-1}\left (b\dow{a},a\up{b}\right )=\left (a\up{b}\upl{b\dow{a}},b\dow{a}\dol{a\up{b}}\right )\\
& (a,b)=S\left (S^{-1}(a,b)\right )=S\left (b\upl{a},a\dol{b}\right )=\left (a\dol{b}\dow{b\upl{a}},b\upl{a}\up{a\dol{b}}\right )
\end{xalignat*} and the desired equalities follow. 
\end{proof}

\begin{lemma}\label{lemma01} Let $X$ and $Y$ be two biquandles. If $f\colon X\to Y$ is a biquandle homomorphism, then $f(a\upl{b})=f(a)\upl{f(b)}$ and $f(a\dol{b})=f(a)\dol{f(b)}$ for every $a,b\in X$.  
\end{lemma}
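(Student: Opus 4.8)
The plan is to use the fact that, by their very definitions, $a\upl b$ and $a\dol b$ are the coordinates of the inverse sideways map $S^{-1}$, together with the observation that a biquandle homomorphism carries $S_X$ to $S_Y$. So the whole statement should come down to ``inverting the intertwining relation'' on the target side.

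First I would record the intertwining property itself. Since $f(x\up y)=f(x)\up{f(y)}$ and $f(x\dow y)=f(x)\dow{f(y)}$ for all $x,y\in X$, we have
\[
S_Y\bigl(f(x),f(y)\bigr)=\bigl(f(y\dow x),\,f(x\up y)\bigr)=(f\times f)\bigl(S_X(x,y)\bigr),
\]
that is, $f\times f$ sends $S_X$ to $S_Y$.

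Next, fix $p,q\in X$ and put $(u,v):=S_X^{-1}(p,q)$, so that $u=q\upl p$, $v=p\dol q$ and $S_X(u,v)=(p,q)$. Applying the intertwining identity to the pair $(u,v)$ gives $S_Y\bigl(f(u),f(v)\bigr)=(f\times f)\bigl(S_X(u,v)\bigr)=\bigl(f(p),f(q)\bigr)$. By the first biquandle axiom the map $S_Y$ is a bijection, so I may apply $S_Y^{-1}$ to both sides and obtain
\[
\bigl(f(u),f(v)\bigr)=S_Y^{-1}\bigl(f(p),f(q)\bigr)=\bigl(f(q)\upl{f(p)},\;f(p)\dol{f(q)}\bigr).
\]
Comparing coordinates yields $f(q\upl p)=f(q)\upl{f(p)}$ and $f(p\dol q)=f(p)\dol{f(q)}$; taking $a=q,b=p$ in the first and $a=p,b=q$ in the second gives exactly the two asserted equalities, since $p,q$ were arbitrary.

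The argument is short; the one point that requires care is that $f\times f$ need not be injective, so one cannot argue by simply ``cancelling'' it — the role of the first biquandle axiom is precisely to supply the bijection $S_Y$ that we invert on the target side instead. Beyond that, the only thing to watch is the bookkeeping of the index positions in $S(x,y)=(y\dow x,x\up y)$ and $S^{-1}(a,b)=(b\upl a,a\dol b)$, which I would double-check when performing the final relabelling.
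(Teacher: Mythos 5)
Your proof is correct and is essentially the same argument as the paper's: the paper pushes the identities of Lemma \ref{lemma0} through $f$ to get $y\dow{x}=f(a)$ and $x\up{y}=f(b)$ (i.e.\ $S_Y(x,y)=(f(a),f(b))$ for $(x,y)=(f\times f)(S_X^{-1}(a,b))$) and then uses Lemma \ref{lemma0} again to solve for $x$ and $y$, which is precisely your ``apply $S_Y^{-1}$ on the target side'' step written out in coordinates. Your packaging via the intertwining relation is a clean equivalent reformulation, and your closing remark correctly identifies that the bijectivity of $S_Y$ (first biquandle axiom), not injectivity of $f\times f$, is what does the work.
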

\begin{proof} Let $f\colon X\to Y$ be a biquandle homomorphism. Choose elements $a,b\in X$ and denote $f(b\upl{a})=x$ and $f(a\dol{b})=y$. By Lemma \ref{lemma0} we have $a\dol{b}\dow{b\upl{a}}=a$, and since $f$ is a biquandle homomorphism, it follows that $f(a\dol{b})\dow{f(b\upl{a})}=y\dow{x}=f(a)$. Also by Lemma \ref{lemma0}, we have $b\upl{a}\up{a\dol{b}}=b$, and since $f$ is a biquandle homomorphism, it follows that $f(b\upl{a})\up{f(a\dol{b})}=x\up{y}=f(b)$. Putting those two equalities together, Lemma \ref{lemma0} gives $$y=y\dow{x}\dol{x\up{y}}=f(a)\dol{f(b)} \textrm{  and  }x=x\up{y}\upl{y\dow{x}}=f(b)\upl{f(a)}\;.$$  
\end{proof} 

The fundamental biquandle of a link is usually defined via a presentation, coming from a link diagram. Following \cite{ISHI}, we define biquandle presentations categorically.
\begin{definition} Let $A$ be a set. A \textbf{free biquandle} on $A$ is the biquandle $F_{BQ}(A)$ together with an injective map $i\colon A\to F_{BQ}(A)$, characterized by the following. For any map $f\colon A\to B$, where $B$ is a biquandle, there exists a unique biquandle homomorphism $\overline{f}\colon F_{BQ}(A)\to B$ such that $f=\overline{f}\circ i$.  

For a biquandle $X$, let $j\colon A\to X$ be a map and let $\overline{j}\colon F_{BQ}(A)\to X$ be the induced biquandle homomorphism. Let $R\subset F_{BQ}(A)\times F_{BQ}(A)$ be a relation on the set $F_{BQ}(A)$. We say that $\langle A|R\rangle $ is a \textbf{presentation} of the biquandle $X$ if \begin{enumerate}
\item $(\overline{j}\times \overline{j})(R)\subset \Delta _{X}$ (here $\Delta _{X}\subset X\times X$ is the diagonal)
\item for any biquandle $Y$ and for any map $f\colon A\to Y$ such that $(\overline{f}\times \overline{f})(R)\subset \Delta _{Y}$, there exists a unique biquandle homomorphism $\widetilde{f}\colon X\to Y$ such that $f=\widetilde{f}\circ j$.  
\end{enumerate} 
\end{definition}
 
Any classical oriented link may be given by its diagram, ie. the image of a regular projection of the link to a plane in $\RR ^{3}$. A link diagram $D$ is a directed 4-valent graph, whose vertices contain the information about the over- and undercrossings. The edges of the graph are called \textbf{semiarcs}, while the vertices are called crossings of the diagram. Denote by $A(D)$ the set of semiarcs and by $C(D)$ the set of crossings of the diagram $D$. In any crossing, the four semiarcs are connected by two \textbf{crossing relations}, depicted in Figure \ref{fig:slika5}. 

\begin{definition} The \textbf{fundamental biquandle} $BQ(L)$ of a link $L$ with a diagram $D$ is the biquandle, given by the presentation $$\langle A(D)|\, \textrm{crossing relations for every }c\in C(D)\rangle \;.$$
\end{definition}

\begin{figure}[h!]
\labellist
\normalsize \hair 2pt
\pinlabel $b$ at 0 10
\pinlabel $a$ at 160 10
\pinlabel $c=a\up{b}$ at 0 200
\pinlabel $d=b\dow{a}$ at 180 200
\pinlabel $a$ at 360 10
\pinlabel $b$ at 520 10
\pinlabel $d=b\dol{a}$ at 380 200
\pinlabel $c=a\upl{b}$ at 540 200
\endlabellist
\begin{center}
\includegraphics[scale=0.4]{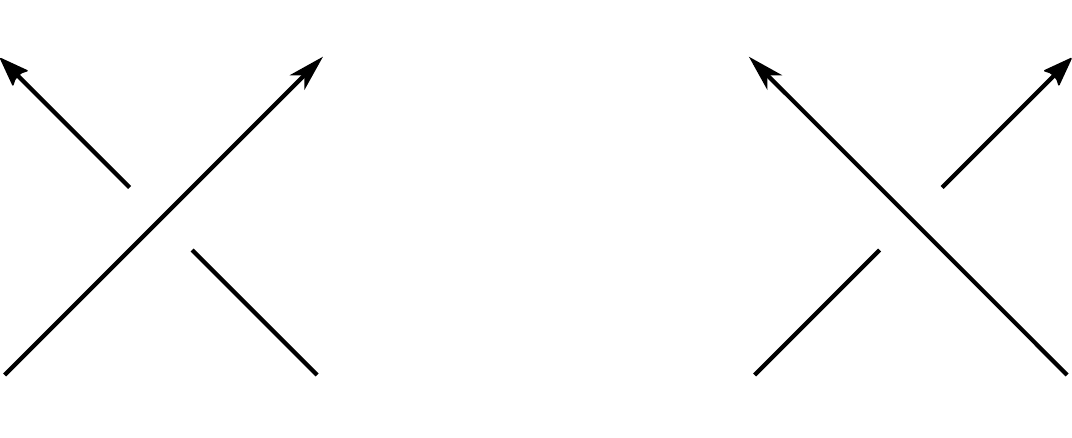}
\caption{Crossing relations between the semiarcs of $D$}
\label{fig:slika5}
\end{center}
\end{figure}
\end{section}

\begin{section}{The topological biquandle of a link}
\label{sec2}
By a link we will mean an oriented subspace of $S^{3}$, homeomorphic to a disjoint union of circles $\sqcup _{i=1}^{k}S^{1}$. For a link $L$, denote by $N_{L}$ a regular neighborhood of $L$ in $S^{3}$ and let $E_{L}=\textrm{closure}(S^{3}-N_{L})$. The orientation of $L$ induces an orientation of its normal bundle using the right-hand rule. 

Choose a 3-ball $B^{3}\subset S^{3}$ such that $N_{L}\subset B^{3}$, then let $z_{0}$ and $z_{1}$ be two antipodal points of $S^{2}=\partial B^{3}$. Define 
$$\mathcal{B}_{L}=\left \{(a_{0},a_{1})|\, a_{i}\colon [0,1]\to E_{L}\textrm{ a path from a point on $\partial N_{L}$ to $z_{i}$ for $i=0,1$ and $a_{0}(0)=a_{1}(0)$}\right \}\;.$$

If $a\colon [0,1]\to E_{L}$ is a path, we denote by $\overline{a}\colon [0,1]\to E_{L}$ the reverse path, given by $\overline{a}(t)=a(1-t)$. Given paths $a,b\colon [0,1]\to E_{L}$ with $a(1)=b(0)$, their combined path $a\cdot b$ is given by $$(a\cdot b)(t)=\left \{ \begin{array}{lr}
a(2t), & 0\leq t\leq \frac{1}{2};\\
b(2t-1), & \frac{1}{2}<t\leq 1.\\
\end{array}
\right .$$ 

We say that two elements $(a_{0},a_{1}),(b_{0},b_{1})\in \mathcal{B}_{L}$ are \textbf{equivalent} if there exists a homotopy $H_{t}\colon [0,1]\to E_{L}$ such that $H_{0}=\overline{a_{0}}\cdot a_{1}$, $H_{1}=\overline{b_{0}}\cdot b_{1}$, $H_{t}(0)=z_{0}$, $H_{t}(1)=z_{1}$ and $H_{t}(\frac{1}{2})\in \partial N_{L}$ for all $t\in [0,1]$. It is easy to see this defines an equivalence relation on the set $\mathcal{B}_{L}$. The quotient set $\widehat{\mathcal{B}}_{L}=\mathcal{B}_{L}/_{\sim }$ will be the underlying set of the topological biquandle of $L$. 

\begin{remark} Observe that every element of $\mathcal{B}_{L}$ is given by a pair of paths $(a_{0},a_{1})$ in $E_{L}$. The homotopy class of the path $a_{i}$ is an element of the fundamental quandle $Q(L)$ with the basepoint $z_{i}$ for $i=0,1$. We thus obtained the set $\bi $ by taking pairs of representatives of the fundamental quandle $Q(L)$, and then imposing on those pairs a new equivalence relation.  
\end{remark} 

The set $\bi $ is closely related to the group of the link $L$. For any point $p\in \partial N_{L}$, denote by $m_{p}$ the loop in $\partial N_{L}$, based at $p$, which goes once around the meridian of $L$ in the positive direction according to the orientation of the normal bundle. Define two maps $p_{i}\colon \mathcal{B}_{L}\to \pi _{1}(E_{L},z_{i})$ by $p_{i}(a_{0},a_{1})=[\overline{a_{i}}\cdot m_{a_{i}(0)}\cdot a_{i}]$ for $i=0,1$.  

\begin{lemma}\label{lemma1} If $(a_{0},a_{1})\sim (b_{0},b_{1})$, then $p_{i}(a_{0},a_{1})=p_{i}(b_{0},b_{1})$ for $i=0,1$. 
\end{lemma}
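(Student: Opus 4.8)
The plan is to exploit the homotopy $H_t$ witnessing $(a_0,a_1)\sim(b_0,b_1)$ to build, for each fixed $i$, a homotopy of loops in $E_L$ based at $z_i$ from $\overline{a_i}\cdot m_{a_i(0)}\cdot a_i$ to $\overline{b_i}\cdot m_{b_i(0)}\cdot b_i$. Recall that $H_t$ is a homotopy of paths from $z_0$ to $z_1$, with $H_t(\tfrac12)\in\partial N_L$, and $H_0=\overline{a_0}\cdot a_1$, $H_1=\overline{b_0}\cdot b_1$. Restricting $H_t$ to the interval $[\tfrac12,1]$ (and reparametrizing) gives a homotopy of paths in $E_L$ from $a_1$ to $b_1$, relative to the endpoint $z_1$, whose other endpoint $\gamma(t):=H_t(\tfrac12)$ traces a path in $\partial N_L$ from $a_1(0)$ to $b_1(0)$; similarly, restricting to $[0,\tfrac12]$ and reversing gives such a homotopy from $a_0$ to $b_0$ with the same moving endpoint $\gamma(t)$. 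So in both cases $i=0,1$ we have a path homotopy (with moving basepoint along $\gamma$) from $a_i$ to $b_i$.

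Next I would track what happens to the meridian loop under this sliding basepoint. The key point is that $\partial N_L$ is a disjoint union of tori, and as the basepoint moves along the path $\gamma$ within a single torus component, the meridian loop $m_{\gamma(t)}$ varies continuously, giving a free homotopy in $\partial N_L$ between $m_{a_i(0)}$ and $\overline{\gamma}\cdot m_{b_i(0)}\cdot\gamma$ (conjugation by the connecting path $\gamma$). Concretely, the meridians fit together into a loop in the total space, so $\overline{\gamma}\cdot m_{a_i(0)}\cdot\gamma$ is homotopic rel endpoints in $\partial N_L\subset E_L$ to $m_{b_i(0)}$ — this uses that the meridian is a well-defined element (up to the basepoint-change isomorphism) coming from the oriented normal bundle, exactly as in the definition of $m_p$.

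Combining the two ingredients: $\overline{a_i}\cdot m_{a_i(0)}\cdot a_i$ is homotopic rel $z_i$, via $H$ on the path part and the meridian-sliding homotopy in the middle, to $\overline{b_i}\cdot\overline{\gamma}\cdot m_{a_i(0)}\cdot\gamma\cdot b_i$, and then, by the previous paragraph, to $\overline{b_i}\cdot m_{b_i(0)}\cdot b_i$. Hence the two loops represent the same element of $\pi_1(E_L,z_i)$, i.e. $p_i(a_0,a_1)=p_i(b_0,b_1)$. I expect the main obstacle to be the bookkeeping in the middle: one must be careful that the ''moving basepoint'' of the homotopy $H_t$ always lies on $\partial N_L$ (which is guaranteed by the definition of $\sim$), that it stays within one torus component so the meridian varies continuously, and that the meridian $m_{\gamma(t)}$ really does depend continuously on $t$ in a way compatible with the orientation convention — this is intuitively clear but needs the normal-bundle structure of $N_L$ to be stated carefully. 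The rest is a routine concatenation-of-homotopies argument.
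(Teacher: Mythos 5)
Your proposal is correct and follows essentially the same route as the paper: the paper also restricts the homotopy $H_t$ to the two halves of $[0,1]$, chooses a homotopy $G_t$ of meridian loops in $\partial N_L$ with $G_t(0)=G_t(1)=H_t(\tfrac12)$ (your ``meridian-sliding'' along $\gamma$), and concatenates these into an explicit based homotopy $S_t$ from $\overline{a_i}\cdot m_{a_i(0)}\cdot a_i$ to $\overline{b_i}\cdot m_{b_i(0)}\cdot b_i$. Your extra care about the meridian depending continuously on the basepoint within one torus component is exactly the point the paper handles by asserting the existence of $G_t$.
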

\begin{proof} Let $(a_{0},a_{1})\sim (b_{0},b_{1})$ be two equivalent elements of $\mathcal{B}_{L}$. Then there exists a homotopy $H_{t}\colon [0,1]\to E_{L}$ such that $H_{0}=\overline{a_{0}}\cdot a_{1}$, $H_{1}=\overline{b_{0}}\cdot b_{1}$, $H_{t}(0)=z_{0}$, $H_{t}(1)=z_{1}$ and $H_{t}(\frac{1}{2})\in \partial N_{L}$ for all $t\in [0,1]$. It follows that $a_{0}(0)$ and $b_{0}(0)$ lie in the same boundary component of $\partial N_{L}$. Since $m_{a_{0}(0)}$ and $m_{b_{0}(0)}$ are two meridians of the same component of $L$, we may choose a homotopy $G_{t}\colon [0,1]\to \partial N_{L}$ such that $G_{0}=m_{a_{0}(0)}$, $G_{1}=m_{b_{0}(0)}$ and $G_{t}(0)=G_{t}(1)=H_{t}(\frac{1}{2})$ for $t\in [0,1]$. Similarly, we may choose a homotopy $J_{t}\colon [0,1]\to \partial N_{L}$ such that $J_{0}=m_{a_{1}(0)}$, $J_{1}=m_{b_{1}(0)}$ and $J_{t}(0)=J_{t}(1)=H_{t}(\frac{1}{2})$ for $t\in [0,1]$. Define a map $S_{t}\colon [0,1]\to E_{L}$ by 
$$S_{t}(u)=\left \{ \begin{array}{lr}
H_{t}\left (\frac{3u}{2}\right ), & 0\leq u\leq \frac{1}{3};\\
G_{t}(3u-1), & \frac{1}{3}\leq u\leq \frac{2}{3};\\
H_{t}\left (\frac{3(1-u)}{2}\right ), &  \frac{2}{3}\leq u\leq 1.
\end{array}
\right .$$ 
%\begin{figure}[h!]
%\labellist
%\normalsize \hair 2pt
%\pinlabel $z_{0}$ at -20 150
%\pinlabel $\overline{a}_{0}$ at 130 50 
%\pinlabel $\overline{b}_{0}$ at 100 280
%\pinlabel $m_{a_{0}(0)}$ at 500 -10
%\pinlabel $m_{b_{0}(0)}$ at 480 400
%\pinlabel $\partial N_{L}$ at 380 260
%\endlabellist
%\begin{center}
%\includegraphics[scale=0.3]{}
%\caption{A sketch of the homotopy $S_{t}$, showing that $p_{0}(a_{0},a_{1})\sim p_{0}(b_{0},b_{1})$}
%\label{fig:slika1}
%\end{center}
%\end{figure}
Now $S_{t}$ is a homotopy between the loops $\overline{a_{0}}\cdot m_{a_{0}(0)}\cdot a_{0}$ and $\overline{b_{0}}\cdot m_{b_{0}(0)}\cdot b_{0}$, which thus represent the same element of the fundamental group $\pi _{1}(E_{L},z_{0})$. It follows that $p_{0}(a_{0},a_{1})=p_{0}(b_{0},b_{1})$. The proof for $i=1$ is similar. 

%Another map $U_{t}\colon [0,1]\to E_{L}$, defined by 
%$$U_{t}(u)=\left \{ \begin{array}{lr}
%H_{t}\left (1-\frac{3u}{2}\right ), & 0\leq u\leq \frac{1}{3};\\
%J_{t}(3u-1), & \frac{1}{3}\leq u\leq \frac{2}{3};\\
%H_{t}\left (\frac{3u}{2}-\frac{1}{2}\right ), &  \frac{2}{3}\leq u\leq 1,
%\end{array}
%\right .$$ is a homotopy between the loops $\overline{a_{1}}\cdot m_{a_{1}(0)}\cdot a_{1}$ and $\overline{b_{1}}\cdot m_{b_{1}(0)}\cdot b_{1}$. It follows that $p_{1}(a_{0},a_{1})=p_{1}(b_{0},b_{1})$. 
\end{proof}

\begin{corollary} The map $p_{i}$ induces a map $\widehat{p}_{i}\colon \bi \to \pi _{1}(E_{L},z_{i})$ for $i=0,1$. 
\end{corollary}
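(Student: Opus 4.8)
The plan is to invoke the universal property of the quotient set $\bi=\mathcal B_L/\!\sim$ directly. Since $\bi$ is by definition the set of equivalence classes of $\mathcal B_L$ under $\sim$, and each $p_i\colon\mathcal B_L\to\pi_1(E_L,z_i)$ is a map from $\mathcal B_L$, the only thing needed to obtain a well-defined induced map $\widehat p_i\colon\bi\to\pi_1(E_L,z_i)$ on the quotient is that $p_i$ be constant on equivalence classes. But that is exactly the content of Lemma \ref{lemma1}: if $(a_0,a_1)\sim(b_0,b_1)$, then $p_i(a_0,a_1)=p_i(b_0,b_1)$.

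Concretely, I would let $q\colon\mathcal B_L\to\bi$ denote the quotient projection $q(a_0,a_1)=[(a_0,a_1)]$, and define $\widehat p_i\bigl([(a_0,a_1)]\bigr):=p_i(a_0,a_1)$. To see this is unambiguous, suppose $[(a_0,a_1)]=[(b_0,b_1)]$ in $\bi$; by definition of the equivalence relation this means $(a_0,a_1)\sim(b_0,b_1)$, so Lemma \ref{lemma1} gives $p_i(a_0,a_1)=p_i(b_0,b_1)$, and hence the value of $\widehat p_i$ on the class does not depend on the chosen representative. By construction $\widehat p_i\circ q=p_i$.

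Since this is purely the standard ``factor a class-constant map through the quotient'' argument, there is essentially no obstacle: the one substantive ingredient, invariance of $p_i$ under $\sim$, has already been established in Lemma \ref{lemma1}, and the corollary is simply recording its immediate consequence. The only thing to be mildly careful about is stating that the equivalence relation defining $\bi$ is precisely the relation $\sim$ with respect to which Lemma \ref{lemma1} is phrased — but this is immediate from the definitions of $\sim$ and of $\widehat{\mathcal B}_L$ given above. So the proof would be a single short paragraph: define $\widehat p_i$ on representatives, cite Lemma \ref{lemma1} for well-definedness, and note $\widehat p_i\circ q=p_i$.
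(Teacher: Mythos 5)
Your proposal is correct and is exactly the argument the paper intends: the corollary is stated as an immediate consequence of Lemma \ref{lemma1}, whose content is precisely that $p_i$ is constant on $\sim$-equivalence classes, so the standard factorization through the quotient $\mathcal{B}_L\to\bi$ gives the induced map. The paper leaves this step implicit, and your one-paragraph write-up supplies nothing more and nothing less than what is needed.
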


Denote by $[a_{0},a_{1}]\in \bi $ the equivalence class of the element $(a_{0},a_{1})\in \mathcal{B}_{L}$. We have found a way to associate to each element $[a_{0},a_{1}]$ of the set $\widehat{\mathcal{B}}_{L}$ two elements of the fundamental groups $\pi _{1}(E_{L},z_{0})$ and $\pi _{1}(E_{L},z_{1})$, namely $\widehat{p}_{0}[a_{0},a_{1}]$ and $\widehat{p}_{1}[a_{0},a_{1}]$. Using this association, we will now define the operations on $\bi $. 

Define two binary operations (called the up- and down- operation) on $\mathcal{B}_{L}$ by 
\begin{xalignat*}{3}
(a_{0},a_{1})^{(b_{0},b_{1})}:=(a_{0}\cdot p_{0}(b_{0},b_{1}),a_{1}) & \quad \textrm{ and}  & (a_{0},a_{1})_{(b_{0},b_{1})}:=(a_{0},a_{1}\cdot p_{1}(b_{0},b_{1}))\;.
\end{xalignat*} 
We intend to show that these operations induce operations on the quotient space $\bi $, and that $\bi $ equipped with those operations forms a biquandle. 

\begin{lemma}\label{lemma2} If $(a_{0},a_{1})\sim (c_{0},c_{1})$ and $(b_{0},b_{1})\sim (d_{0},d_{1})$, then $(a_{0},a_{1})^{(b_{0},b_{1})}\sim (c_{0},c_{1})^{(d_{0},d_{1})}$ and $(a_{0},a_{1})_{(b_{0},b_{1})}\sim (c_{0},c_{1})_{(d_{0},d_{1})}$. 
\end{lemma}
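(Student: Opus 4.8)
The plan is to deduce the lemma from Lemma~\ref{lemma1} together with one geometric construction: attaching a \emph{fixed} loop to the first (respectively the second) path of a pair carries equivalent elements of $\mathcal{B}_{L}$ to equivalent elements.

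First I would record a routine sublemma: if $\gamma$ and $\gamma'$ are loops based at $z_{0}$ that are homotopic rel endpoints, then $(a_{0}\cdot \gamma, a_{1})\sim (a_{0}\cdot \gamma', a_{1})$ --- one simply slides a rel-endpoint homotopy from $a_{0}\cdot\gamma$ to $a_{0}\cdot\gamma'$ into the first half of $\overline{a_{0}\cdot\gamma}\cdot a_{1}$, keeping the common point $a_{0}(0)\in\partial N_{L}$ fixed throughout, and the analogous statement holds for a loop at $z_{1}$ attached to the second path. Consequently $(a_{0},a_{1})^{(b_{0},b_{1})}$ and $(a_{0},a_{1})_{(b_{0},b_{1})}$, viewed as elements of $\bi$, depend on $(b_{0},b_{1})$ only through $p_{0}(b_{0},b_{1})$ and $p_{1}(b_{0},b_{1})$ respectively. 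By Lemma~\ref{lemma1} we have $p_{i}(b_{0},b_{1})=p_{i}(d_{0},d_{1})$, hence $(c_{0},c_{1})^{(d_{0},d_{1})}\sim(c_{0},c_{1})^{(b_{0},b_{1})}$ and $(c_{0},c_{1})_{(d_{0},d_{1})}\sim(c_{0},c_{1})_{(b_{0},b_{1})}$. Thus it suffices to prove, assuming only $(a_{0},a_{1})\sim(c_{0},c_{1})$, that
$$(a_{0},a_{1})^{(b_{0},b_{1})}\sim(c_{0},c_{1})^{(b_{0},b_{1})}\qquad\text{and}\qquad(a_{0},a_{1})_{(b_{0},b_{1})}\sim(c_{0},c_{1})_{(b_{0},b_{1})}.$$

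Fix a loop $\gamma_{0}$ based at $z_{0}$ representing $p_{0}(b_{0},b_{1})$, so that $(a_{0},a_{1})^{(b_{0},b_{1})}=(a_{0}\cdot\gamma_{0},a_{1})$ and similarly for $(c_{0},c_{1})$. Let $H_{t}\colon[0,1]\to E_{L}$ be a homotopy realizing $(a_{0},a_{1})\sim(c_{0},c_{1})$ as in the definition of $\sim$, i.e. $H_{0}=\overline{a_{0}}\cdot a_{1}$, $H_{1}=\overline{c_{0}}\cdot c_{1}$, $H_{t}(0)=z_{0}$, $H_{t}(1)=z_{1}$, $H_{t}(\tfrac12)\in\partial N_{L}$. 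I would then set
$$K_{t}(u)=\left\{\begin{array}{lr}\gamma_{0}(1-4u), & 0\leq u\leq\tfrac14;\\ H_{t}(2u-\tfrac12), & \tfrac14\leq u\leq\tfrac12;\\ H_{t}(u), & \tfrac12\leq u\leq1,\end{array}\right.$$
which inserts $\overline{\gamma_{0}}$ into the path while keeping the distinguished point at parameter $\tfrac12$. Continuity at $\tfrac14$ and $\tfrac12$ holds because $\gamma_{0}(0)=z_{0}=H_{t}(0)$; one checks $K_{t}(0)=z_{0}$, $K_{t}(1)=z_{1}$, $K_{t}(\tfrac12)=H_{t}(\tfrac12)\in\partial N_{L}$ for all $t$, and --- substituting $H_{0}=\overline{a_{0}}\cdot a_{1}$ and $H_{1}=\overline{c_{0}}\cdot c_{1}$ and unwinding the concatenation conventions --- that $K_{0}=\overline{a_{0}\cdot\gamma_{0}}\cdot a_{1}$ and $K_{1}=\overline{c_{0}\cdot\gamma_{0}}\cdot c_{1}$ on the nose, so $K_{t}$ witnesses the first equivalence. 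For the second, with $\gamma_{1}$ a loop at $z_{1}$ representing $p_{1}(b_{0},b_{1})$, the mirror construction
$$K'_{t}(u)=\left\{\begin{array}{lr}H_{t}(u), & 0\leq u\leq\tfrac12;\\ H_{t}(2u-\tfrac12), & \tfrac12\leq u\leq\tfrac34;\\ \gamma_{1}(4u-3), & \tfrac34\leq u\leq1\end{array}\right.$$
does the job, by the same checks.

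The only genuinely delicate point is the reparametrisation in the third paragraph: because the definition of $\sim$ pins the $\partial N_{L}$-point to parameter $\tfrac12$, one cannot prepend $\overline{\gamma_{0}}$ to $H_{t}$ outright but must absorb it into the first half of the path, and must then verify that at $t=0,1$ the formula reproduces $\overline{a_{0}\cdot\gamma_{0}}\cdot a_{1}$ and $\overline{c_{0}\cdot\gamma_{0}}\cdot c_{1}$ verbatim under the paper's conventions for $\overline{(\cdot)}$ and $\cdot$. The reduction via Lemma~\ref{lemma1}, the continuity verifications, and the mirror case are all routine.
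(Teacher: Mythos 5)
Your proof is correct and follows essentially the same route as the paper: the explicit piecewise homotopy $K_{t}$ (loop on $[0,\tfrac14]$, compressed first half of $H_{t}$ on $[\tfrac14,\tfrac12]$, second half unchanged) is exactly the paper's construction, which simply lets the loop piece vary as a homotopy $G_{t}$ from $p_{0}(b_{0},b_{1})$ to $p_{0}(d_{0},d_{1})$ supplied by Lemma~\ref{lemma1}, thereby handling both hypotheses in one step rather than in your two-step factorization.
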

\begin{proof} Let $(a_{0},a_{1})\sim (c_{0},c_{1})$ and $(b_{0},b_{1})\sim (d_{0},d_{1})$ in $\mathcal{B}_{L}$. There is a homotopy $H_{t}\colon [0,1]\to E_{L}$ such that $H_{0}=\overline{a_{0}}\cdot a_{1}$, $H_{1}=\overline{c_{0}}\cdot c_{1}$, $H_{t}(0)=z_{0}$, $H_{t}(1)=z_{1}$ and $H_{t}(\frac{1}{2})\in \partial N_{L}$ for all $t\in [0,1]$. Since $(b_{0},b_{1})\sim (d_{0},d_{1})$, it follows by Lemma \ref{lemma1} that there exists a homotopy $G_{t}\colon [0,1]\to E_{L}$ such that $G_{0}=p_{0}(b_{0},b_{1})$, $G_{1}=p_{0}(d_{0},d_{1})$ and $G_{t}(0)=G_{t}(1)=z_{0}$ for all $t\in [0,1]$. Define a map $S_{t}\colon [0,1]\to E_{L}$ by 
$$S_{t}(u)=\left \{ \begin{array}{lr}
G_{t}(1-4u), & 0\leq u\leq \frac{1}{4};\\
H_{t}(2u-\frac{1}{2}), & \frac{1}{4}\leq u\leq \frac{1}{2};\\
H_{t}(u), &  \frac{1}{2}\leq u\leq 1.
\end{array}
\right .$$ Now $S_{t}$ is a homotopy from $\overline{a_{0}\cdot p_{0}(b_{0},b_{1})}\cdot a_{1}$ to $\overline{c_{0}\cdot p_{0}(d_{0},d_{1})}\cdot c_{1}$, for which $S_{t}(0)=z_{0}$, $S_{t}(1)=z_{1}$ and $S_{t}\left (\frac{1}{2}\right )\in \partial N_{L}$ for all $t\in [0,1]$. It follows that $(a_{0},a_{1})^{(b_{0},b_{1})}\sim (c_{0},c_{1})^{(d_{0},d_{1})}$. The proof is similar for $(a_{0},a_{1})_{(b_{0},b_{1})}\sim (c_{0},c_{1})_{(d_{0},d_{1})}$.
%\begin{figure}[h!]
%\labellist
%\normalsize \hair 2pt
%\pinlabel $z_{0}$ at 310 470
%\pinlabel $z_{1}$ at 780 470
%\pinlabel $\partial N_{L}$ at 560 300
%\pinlabel $\overline{a}_{0}$ at 430 355 
%\pinlabel $a_{1}$ at 680 350
%\pinlabel $\overline{c}_{0}$ at 430 620
%\pinlabel $c_{1}$ at 660 620
%\pinlabel $p_{0}(b_{0},b_{1})$ at 180 300 
%\pinlabel $p_{0}(d_{0},d_{1})$ at 470 30
%\endlabellist
%\begin{center}
%\includegraphics[scale=0.3]{}
%\caption{A sketch of the homotopy $S_{t}$, showing that $(a_{0},a_{1})^{(b_{0},b_{1})}\sim (c_{0},c_{1})^{(d_{0},d_{1})}$}
%\label{fig:slika2}
%\end{center}
%\end{figure}
%Since $(b_{0},b_{1})\sim (d_{0},d_{1})$, it follows by Lemma \ref{lemma1} that there exists a homotopy $J_{t}\colon [0,1]\to E_{L}$ such that $J_{0}=p_{1}(b_{0},b_{1})$, $J_{1}=p_{1}(d_{0},d_{1})$ and $J_{t}(0)=J_{t}(1)=z_{1}$ for all $t\in [0,1]$. Define a map $U_{t}\colon [0,1]\to E_{L}$ by 
%$$U_{t}(u)=\left \{ \begin{array}{lr}
%H_{t}(u), & 0\leq u\leq \frac{1}{2};\\
%H_{t}(2u-\frac{1}{2}), & \frac{1}{2}\leq u\leq \frac{3}{4};\\
%J_{t}(4u-3), &  \frac{3}{4}\leq u\leq 1.
%\end{array}
%\right .$$ Then $U_{t}$ is a homotopy from $\overline{a_{0}}\cdot a_{1}\cdot p_{1}(b_{0},b_{1})$ to $\overline{c_{0}}\cdot c_{1}\cdot p_{1}(d_{0},d_{1})$, for which $U_{t}(0)=z_{0}$, $U_{t}(1)=z_{1}$ and $U_{t}\left (\frac{1}{2}\right )\in \partial N_{L}$ for all $t\in [0,1]$. It follows that $(a_{0},a_{1})_{(b_{0},b_{1})}\sim (c_{0},c_{1})_{(d_{0},d_{1})}$. 
\end{proof}

\begin{corollary} There are induced up- and down- operations on $\bi $, defined by 
\begin{xalignat*}{3}
[a_{0},a_{1}]^{[b_{0},b_{1}]}:=[a_{0}\cdot p_{0}(b_{0},b_{1}),a_{1}]\; & \quad \textrm{ and} & [a_{0},a_{1}]_{[b_{0},b_{1}]}:=[a_{0},a_{1}\cdot p_{1}(b_{0},b_{1})]\;.
\end{xalignat*}
\end{corollary}

\begin{lemma}\label{lemma3} The maps $f_{a},g_{a}\colon \bi \to \bi $, defined by $f_{a}(x)=x^{a}$ and $g_{a}(x)=x_{a}$, are bijective for any $a\in \bi $. 
\end{lemma}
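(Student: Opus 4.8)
The plan is to exhibit explicit inverses for $f_a$ and $g_a$ by running the defining path concatenations backwards, using the fact that the elements $p_0(b_0,b_1)\in\pi_1(E_L,z_0)$ and $p_1(b_0,b_1)\in\pi_1(E_L,z_1)$ are honest group elements (represented by loops based at $z_0$, resp. $z_1$) and hence invertible. Writing $a=[b_0,b_1]$ and $\beta_i=p_i(b_0,b_1)$, I would define candidate maps $f_a',g_a'\colon\bi\to\bi$ by
\begin{xalignat*}{2}
f_a'[x_0,x_1]:=[x_0\cdot\overline{\beta_0},x_1]\; & \quad\textrm{and}\quad & g_a'[x_0,x_1]:=[x_0,x_1\cdot\overline{\beta_1}]\;,
\end{xalignat*}
where $\overline{\beta_i}$ denotes a loop representing the inverse class $\beta_i^{-1}$. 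The first thing to check is that these are well defined on $\bi$: this is the exact analogue of Lemma \ref{lemma2}, since $f_a'$ and $g_a'$ have the same form as the up/down operations but with $\beta_i$ replaced by a fixed loop, so the same concatenated homotopy argument (Lemma \ref{lemma2} with $G_t$ a constant homotopy at $\overline{\beta_i}$) applies verbatim; alternatively one notes $f_a'$ and $g_a'$ depend only on the classes $\beta_i\in\pi_1$, which are invariants of the equivalence class of $a$ by Lemma \ref{lemma1}.

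Next I would verify $f_a'\circ f_a=\mathrm{id}=f_a\circ f_a'$ and likewise for $g$. We have
$$f_a'(f_a[x_0,x_1])=f_a'[x_0\cdot\beta_0,x_1]=[x_0\cdot\beta_0\cdot\overline{\beta_0},x_1]\;,$$
and since $\beta_0\cdot\overline{\beta_0}$ is a null-homotopic loop at $z_0$, the path $x_0\cdot\beta_0\cdot\overline{\beta_0}$ is homotopic rel endpoints (with the midpoint condition $H_t(\tfrac12)\in\partial N_L$ automatically satisfied, since the homotopy contracting $\beta_0\cdot\overline\beta_0$ lives near $z_0$ and never disturbs the point on $\partial N_L$) to $x_0$; hence $[x_0\cdot\beta_0\cdot\overline{\beta_0},x_1]=[x_0,x_1]$. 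The other three composites are identical computations. The down-operation case uses $\beta_1\cdot\overline{\beta_1}\simeq\ast$ at $z_1$ instead. One should also record that $p_0$, $p_1$ are defined as concrete loops so that "reverse path'' $\overline{\beta_i}$ makes literal sense and $\beta_i\cdot\overline{\beta_i}$ genuinely contracts.

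The only mildly delicate point — and the one I would be most careful about — is confirming that the contracting homotopy for $x_0\cdot\beta_0\cdot\overline{\beta_0}\simeq x_0$ can be arranged to respect the structural constraints defining $\sim$, namely that throughout the homotopy the endpoints stay fixed at $z_0$ and $z_1$ and the midpoint stays on $\partial N_L$. Since only the tail of the $z_0$-path is being modified and the $z_1$-path $x_1$ is untouched, the midpoint of $\overline{x_0'}\cdot x_1$ (where $x_0'=x_0\cdot\beta_0\cdot\overline{\beta_0}$) is $x_0'(0)=x_0(0)\in\partial N_L$, unchanged during the contraction; so the constraint holds for free. Everything else is a routine reparametrization bookkeeping of the kind already carried out in Lemmas \ref{lemma1} and \ref{lemma2}, so I would state the inverses, note well-definedness by appeal to Lemma \ref{lemma2}, and dispatch the cancellation with a one-line homotopy.
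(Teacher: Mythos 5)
Your proposal is correct and takes essentially the same approach as the paper: the paper's proof likewise defines explicit candidate inverses $f_{a}'$ and $g_{a}'$ by appending the reversed loop $\overline{p_{i}(\,\cdot\,)}$ to the appropriate coordinate and declares them inverse to $f_{a}$ and $g_{a}$, leaving the verification as ``easy to see.'' Your version merely supplies the well-definedness and cancellation details (via the Lemma \ref{lemma2}-style homotopy and the midpoint constraint) that the paper omits.
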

\begin{proof}Define maps $f_{a}',g_{a}'\colon \bi \to \bi $ by $f_{a}'([b_{0},b_{1}])=[a_{0}\cdot \overline{p_{0}(b_{0},b_{1})},a_{1}]$ and $g_{a}'([b_{0},b_{1}])=[a_{0},a_{1}\cdot \overline{p_{1}(b_{0},b_{1})}]$. It is easy to see that $f_{a}'$ is the inverse of $f_{a}$ and $g_{a}'$ is the inverse of $g_{a}$, thus $f_{a}$ and $g_{a}$ are bijective. 
\end{proof}

\begin{Theorem} \label{th1} The set $\bi $, equipped with the induced up- and down- operations, is a biquandle. 
\end{Theorem}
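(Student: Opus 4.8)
The plan is to check the three axioms of Definition~\ref{def1} in turn, exploiting that each operation modifies only one of the two paths of a representative. I would first record two ``behaviour laws''. Write $x=[a_{0},a_{1}]$ and, for $y\in\bi$, choose a loop $p_{0}(y)$ representing $\widehat{p}_{0}(y)\in\pi_{1}(E_{L},z_{0})$ and a loop $p_{1}(y)$ representing $\widehat{p}_{1}(y)\in\pi_{1}(E_{L},z_{1})$; by definition $x\up{y}=[a_{0}\cdot p_{0}(y),a_{1}]$ and $x\dow{y}=[a_{0},a_{1}\cdot p_{1}(y)]$. From this and the formula $p_{i}(a_{0},a_{1})=[\overline{a_{i}}\cdot m_{a_{i}(0)}\cdot a_{i}]$ one reads off: (a) $\widehat{p}_{0}(x\dow{y})=\widehat{p}_{0}(x)$ and $\widehat{p}_{1}(x\up{y})=\widehat{p}_{1}(x)$, since $\widehat{p}_{0}$ (resp.\ $\widehat{p}_{1}$) depends only on the first (resp.\ second) path of a representative; and (b) $\widehat{p}_{0}(x\up{y})=\widehat{p}_{0}(y)^{-1}\widehat{p}_{0}(x)\widehat{p}_{0}(y)$ and $\widehat{p}_{1}(x\dow{y})=\widehat{p}_{1}(y)^{-1}\widehat{p}_{1}(x)\widehat{p}_{1}(y)$, using $(a_{0}\cdot p_{0}(y))(0)=a_{0}(0)$. (It also follows that the two operations commute, $(x\up{y})\dow{z}=(x\dow{z})\up{y}=[a_{0}\cdot p_{0}(y),a_{1}\cdot p_{1}(z)]$ --- a first sign that $\bi$ is simpler than a general biquandle.)

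Granting these laws, Axiom~3 reduces to bookkeeping in the two fundamental groups. For the Up Interchange law, $a\up{b}\up{c}=[a_{0}\cdot p_{0}(b)\,p_{0}(c),a_{1}]$ while $a\up{c\dow{b}}\up{b\up{c}}=[a_{0}\cdot p_{0}(c\dow{b})\,p_{0}(b\up{c}),a_{1}]$, and by (a) and (b) both appended loops represent $\widehat{p}_{0}(b)\widehat{p}_{0}(c)\in\pi_{1}(E_{L},z_{0})$, so the two sides agree in $\bi$; the Down Interchange law is the mirror statement for the second path. For the Rule of Five, $a\dow{b}\up{c\dow{b\up{a}}}$ equals $[a_{0}\cdot p_{0}(c),a_{1}\cdot p_{1}(b)]$ because by (a) the class $c\dow{b\up{a}}$ has the same $\widehat{p}_{0}$ as $c$, and $a\up{c}\dow{b\up{c\dow{a}}}$ equals the same class because by (a) the class $b\up{c\dow{a}}$ has the same $\widehat{p}_{1}$ as $b$.

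For Axiom~1, the maps $f_{a},g_{a}$ are bijective by Lemma~\ref{lemma3}; for $S$ I would exhibit the inverse $S^{-1}(u,v)=\left(f_{u}^{-1}(v),g_{v}^{-1}(u)\right)$, which is well defined on $\bi\times\bi$ by Lemma~\ref{lemma3}. To verify $S\circ S^{-1}=\mathrm{id}$ and $S^{-1}\circ S=\mathrm{id}$ one uses only (a): e.g.\ if $(u,v)=S(x,y)=(y\dow{x},x\up{y})$ then $\widehat{p}_{0}(u)=\widehat{p}_{0}(y)$ and $\widehat{p}_{1}(v)=\widehat{p}_{1}(x)$, hence $y\dow{v}=y\dow{x}=u$ and $x\up{u}=x\up{y}=v$, so $g_{v}^{-1}(u)=y$ and $f_{u}^{-1}(v)=x$; the other composition is identical once one notes that $f_{a}^{-1}$ (resp.\ $g_{a}^{-1}$) leaves the second (resp.\ first) path of a representative unchanged.

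The step I expect to be the crux is Axiom~2, the only one still requiring a genuinely topological move once the behaviour laws are in place. Put $w=f_{a}^{-1}(a)=[a_{0}\cdot\overline{p_{0}(a)},a_{1}]$ (Lemma~\ref{lemma3}). Cancelling the subpath $a_{0}\cdot\overline{a_{0}}$ inside the first path --- a homotopy rel endpoints, which keeps the midpoint of the combined path fixed at $q:=a_{0}(0)=a_{1}(0)$ --- rewrites $w=[\overline{m_{q}}\cdot a_{0},a_{1}]$, whose combined path $\overline{a_{0}}\cdot m_{q}\cdot a_{1}$ carries its distinguished midpoint just \emph{after} the meridian $m_{q}$. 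On the other hand $a\dow{w}=[a_{0},a_{1}\cdot p_{1}(w)]$, and $p_{1}(w)=[\overline{a_{1}}\cdot m_{q}\cdot a_{1}]=p_{1}(a)$ since $w$ has second path $a_{1}$; cancelling $a_{1}\cdot\overline{a_{1}}$ rewrites $a\dow{w}=[a_{0},m_{q}\cdot a_{1}]$, whose combined path is the \emph{same} arc $\overline{a_{0}}\cdot m_{q}\cdot a_{1}$, now with its midpoint just \emph{before} $m_{q}$. Since $m_{q}$ lies in $\partial N_{L}$, a homotopy that merely reparametrizes this arc so as to slide the distinguished midpoint along $m_{q}$ meets every condition in the definition of $\sim$; hence $w\sim a\dow{w}$, i.e.\ $f_{a}^{-1}(a)=a\dow{f_{a}^{-1}(a)}$, and $g_{a}^{-1}(a)=a\up{g_{a}^{-1}(a)}$ follows by the mirror argument with the roles of the two paths interchanged and $m_{q}$ replaced by $\overline{m_{q}}$. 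Once all three axioms are checked, $\bi$ is a biquandle.
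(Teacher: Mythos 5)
Your proof is correct and takes essentially the same route as the paper: all three axioms are verified directly from the path formulas, the inverse you exhibit for $S$ is exactly the paper's map $T$, and your argument for Axiom~2 (sliding the distinguished midpoint along the meridian $m_{q}\subset \partial N_{L}$) is precisely the paper's observation that $m_{a_{0}(0)}=m_{a_{1}(0)}$ makes the two combined paths homotopic through maps keeping $H_{t}(\tfrac{1}{2})$ in $\partial N_{L}$. Your packaging of the computations into the invariance and conjugation laws for $\widehat{p}_{0},\widehat{p}_{1}$ is a clean reorganization of the paper's explicit concatenation-and-cancellation calculations, and your commutativity remark anticipates Lemma~\ref{lemma4}.
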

\begin{proof} For any $a,b\in \bi $, denote $a\up{b}:=a^{b}$ and $a\dow{b}:=a_{b}$. We need to show that $\bi $ equipped with those operations satisfies all the biquandle axioms. \\
(1)  Let $a\in \bi $. The maps $f_{a},g_{a}\colon \bi \to \bi $, defined by $f_{a}(x)=x\up{a}$ and $g_{a}(x)=x\dow{a}$, are bijective by Lemma \ref{lemma3}. The map $S\colon \bi \times \bi \to \bi \times \bi $ is defined by $S(a,b)=(b\dow{a},a\up{b})$. Consider another map $T\colon \bi \times \bi \to \bi \times \bi $, defined by $T\left ([a_{0},a_{1}],[b_{0},b_{1}]\right )=\left ([b_{0}\cdot \overline{p_{0}(a_{0},a_{1})},b_{1}],[a_{0},a_{1}\cdot \overline{p_{1}(b_{0},b_{1}}]\right )$, and compute
\begin{xalignat*}{1}
& T\left (S([a_{0},a_{1}],[b_{0},b_{1}])\right )=T([b_{0},b_{1}\cdot p_{1}(a_{0},a_{1})],[a_{0}\cdot p_{0}(b_{0},b_{1}),a_{1}])=\\
& =([a_{0}\cdot p_{0}(b_{0},b_{1})\cdot \overline{p_{0}(b_{0},b_{1}\cdot p_{1}(a_{0},a_{1}))},a_{1}],[b_{0},b_{1}\cdot p_{1}(a_{0},a_{1})\cdot \overline{p_{1}(a_{0}\cdot p_{0}(b_{0},b_{1}),a_{1})})=\\
& =([a_{0}\overline{b}_{0}m_{b_{0}(0)}b_{0}\overline{b}_{0}\overline{m}_{b_{0}(0)}b_{0},a_{1}],[b_{0},b_{1}\overline{a}_{1}m_{a_{1}(0)}a_{1}\overline{a}_{1}\overline{m}_{a_{1}(0)}a_{1}])=\left ([a_{0},a_{1}],[b_{0},b_{1}]\right )
\end{xalignat*} A similar calculation shows that $ST=id$, thus $S$ is bijective with inverse $T$. \\
%\begin{xalignat*}{1}
%& S\left (T([a_{0},a_{1}],[b_{0},b_{1}])\right )=S([b_{0}\cdot \overline{p_{0}(a_{0},a_{1})},b_{1}],[a_{0},a_{1}\cdot \overline{p_{1}(b_{0},b_{1})}])=\\
%& ([a_{0},a_{1}\cdot \overline{p_{1}(b_{0},b_{1})}\cdot p_{1}(b_{0}\cdot \overline{p_{0}(a_{0},a_{1})},b_{1})],[b_{0}\cdot \overline{p_{0}(a_{0},a_{1})}\cdot p_{0}(a_{0},a_{1}\cdot p_{1}(b_{0},b_{1})),b_{1}]=\\
%& =\left ([a_{0},a_{1}\overline{b}_{1}\overline{m}_{b_{1}(0)}b_{1}\overline{b}_{1}m_{b_{1}(0)}b_{1}],[b_{0}\overline{a}_{0}\overline{m}_{a_{0}(0)}a_{0}\overline{a}_{0}m_{a_{0}(0)}a_{0},b_{1}]\right )=\left ([a_{0},a_{1}],[b_{0},b_{1}]\right )
%\end{xalignat*}
(2) Let $a=[a_{0},a_{1}]\in \bi $. We calculate \begin{xalignat*}{1}
& f_{a}^{-1}(a)=[a_{0},a_{1}]^{\overline{[a_{0},a_{1}]}}=[a_{0}\overline{a}_{0}\overline{m}_{a_{0}(0)}a_{0},a_{1}]=[\overline{m}_{a_{0}(0)}a_{0},a_{1}]\\
& a\dow{f_{a}^{-1}(a)}=[a_{0},a_{1}]_{\left ([a_{0},a_{1}]^{\overline{[a_{0},a_{1}]}}\right )}=[a_{0},a_{1}]_{[a_{0}\overline{a}_{0}\overline{m}_{a_{0}(0)}a_{0},a_{1}]}=[a_{0},a_{1}\overline{a}_{1}m_{a_{1}(0)}a_{1}]=[a_{0},m_{a_{1}(0)}a_{1}]
\end{xalignat*}
Since $a_{0}(0)=a_{1}(0)$, we have $m_{a_{0}(0)}=m_{a_{1}(0)}$ and therefore the path $\overline{\overline{m}_{a_{0}(0)}a_{0}}a_{1}$ is homotopic to the path $\overline{a}_{0}m_{a_{1}(0)}a_{1}$. It follows that $f_{a}^{-1}(a)=a\dow{f_{a}^{-1}(a)}$. The proof of $g_{a}^{-1}(a)=a\up{g_{a}^{-1}(a)}$ is similar. \\
%Similarly, we compute \begin{xalignat*}{1}
%& g_{a}^{-1}(a)=[a_{0},a_{1}]_{\overline{[a_{0},a_{1}]}}=[a_{0},a_{1}\overline{a}_{1}\overline{m}_{a_{1}(0)}a_{1}]=[a_{0},\overline{m}_{a_{1}(0)}a_{1}]\\
%& a\up{g_{a}^{-1}(a)}=[a_{0},a_{1}]^{\left ([a_{0},a_{1}]_{\overline{[a_{0},a_{1}]}}\right )}=[a_{0},a_{1}]^{[a_{0},\overline{m}_{a_{1}(0)}a_{1}]}=[a_{0}\overline{a}_{0}m_{a_{0}(0)}a_{0},a_{1}]=[m_{a_{0}(0)}a_{0},a_{1}]
%\end{xalignat*} Since $a_{0}(0)=a_{1}(0)$, we have $m_{a_{0}(0)}=m_{a_{1}(0)}$ and therefore the path $\overline{a}_{0}\overline{m}_{a_{1}(0)}a_{1}$ is homotopic to the path $\overline{m_{a_{0}(0)}a_{0}}a_{1}$. It follows that $g_{a}^{-1}(a)=a\up{g_{a}^{-1}(a)}$.   
(3) Let $a=[a_{0},a_{1}]$, $b=[b_{0},b_{1}]$ and $c=[c_{0},c_{1}]$ be elements of $\bi $. Then we have \begin{xalignat*}{1}
& a\up{c\dow{b}}\up{b\up{c}}=\left (a^{c_{b}}\right )^{(b^{c})}=\left ([a_{0},a_{1}]^{[c_{0},c_{1}\overline{b}_{1}m_{b_{1}(0)}b_{1}]}\right )^{[b_{0}\overline{c}_{0}m_{c_{0}(0)}c_{0},b_{1}]}=\\
& =[a_{0}\overline{c}_{0}m_{c_{0}(0)}c_{0},a_{1}]^{[b_{0}\overline{c}_{0}m_{c_{0}(0)}c_{0},b_{1}]}=[a_{0}\overline{c}_{0}m_{c_{0}(0)}c_{0}\overline{c}_{0}\overline{m}_{c_{0}(0)}c_{0}\overline{b}_{0}m_{b_{0}(0)}b_{0}\overline{c}_{0}m_{c_{0}(0)}c_{0},a_{1}]=\\
& =[a_{0}\overline{b}_{0}m_{b_{0}(0)}b_{0}\overline{c}_{0}m_{c_{0}(0)}c_{0},a_{1}]=\left ([a_{0},a_{1}]^{[b_{0},b_{1}]}\right )^{[c_{0},c_{1}]}=a\up{b}\up{c}\\
& a\dow{b}\up{c\dow{b\up{a}}}=\left (a_{b}\right )^{c_{(b^{a})}}=[a_{0},a_{1}\overline{b}_{1}m_{b_{1}(0)}b_{1}]^{[c_{0},c_{1}]_{[b_{0}\overline{a}_{0}m_{a_{0}(0)}a_{0},b_{1}]}}=\\
& =[a_{0},a_{1}\overline{b}_{1}m_{b_{1}(0)}b_{1}]^{[c_{0},c_{1}\overline{b}_{1}m_{b_{1}(0)}b_{1}]}=[a_{0}\overline{c}_{0}m_{c_{0}(0)}c_{0},a_{1}\overline{b}_{1}m_{b_{1}(0)}b_{1}]=\\
& [a_{0}\overline{c}_{0}m_{c_{0}(0)}c_{0},a_{1}]_{[b_{0}\overline{c}_{0}m_{c_{0}(0)}c_{0},b_{1}]}= [a_{0}\overline{c}_{0}m_{c_{0}(0)}c_{0},a_{1}]_{\left ([b_{0},b_{1}]^{[c_{0},c_{1}\overline{a}_{1}m_{a_{1}(0)}a_{1}]}\right )}=\\
& =\left ([a_{0},a_{1}]^{[c_{0},c_{1}]}\right )_{\left ([b_{0},b_{1}]^{[c_{0},c_{1}]_{[a_{0},a_{1}]}}\right )}=a\up{c}\dow{b\up{c\dow{a}}}\\
\end{xalignat*} A similar calculation proves the Down Interchanges equality $ a\dow{c\up{b}}\dow{b\dow{c}}=a\dow{b}\dow{c}$. Therefore $\bi $ is a biquandle.
%\begin{xalignat*}
%& a\dow{c\up{b}}\dow{b\dow{c}}=\left (a_{c^{b}}\right )_{b_{c}}=\left ([a_{0},a_{1}]_{[c_{0}\overline{b}_{0}m_{b_{0}(0)}b_{0},c_{1}]}\right )_{[b_{0},b_{1}\overline{c}_{1}m_{c_{1}(0)}c_{1}]}=\\
%& =[a_{0},a_{1}\overline{c}_{1}m_{c_{1}(0)}c_{1}]_{[b_{0},b_{1}\overline{c}_{1}m_{c_{1}(0)}c_{1}]}=[a_{0},a_{1}\overline{c}_{1}m_{c_{1}(0)}c_{1}\overline{c}_{1}\overline{m}%_{c_{1}(0)}c_{1}\overline{b}_{1}m_{b_{1}(0)}b_{1}\overline{c}_{1}m_{c_{1}(0)}c_{1}]=\\
%& =[a_{0},a_{1}\overline{b}_{1}m_{b_{1}(0)}b_{1}\overline{c}_{1}m_{c_{1}(0)}c_{1}]=\left ([a_{0},a_{1}]_{[b_{0},b_{1}]}\right )_{[c_{0},c_{1}]}=a\dow{b}\dow{c}
%\end{xalignat*}  
\end{proof}

Since $\bi $ is a biquandle, there are two more operations $\upl{\vbox to 0.2cm {\, }}$ and $\dol{\vbox to 0.2cm {\, }}$ on $\bi $, defined by $S^{-1}(a,b)=(b\upl{a},a\dol{b})$. We call those operations the up-bar and the down-bar operation respectively. It follows from the proof of Theorem \ref{th1} that the bar operations are computed as \begin{xalignat*}{1}
& [a_{0},a_{1}]\upl{[b_{0},b_{1}]}=[a_{0},a_{1}]^{\overline{[b_{0},b_{1}]}}=[a_{0}\cdot \overline{p_{0}(b_{0},b_{1})},a_{1}] \textrm{ and }\quad  \\
& [a_{0},a_{1}]\dol{[b_{0},b_{1}]}=[a_{0},a_{1}]_{\overline{[b_{0},b_{1}]}}=[a_{0},a_{1}\cdot \overline{p_{1}(b_{0},b_{1})}]\;.
\end{xalignat*}

\begin{definition} Biquandle $\bi $ is called the \textbf{topological biquandle} of the link $L$. 
\end{definition}

Observe that in the case of the topological biquandle, the name \textit{biquandle} becomes further justified, since every element of $\bi $ is represented by an ordered pair of paths (whose homotopy classes represent the elements of the fundamental quandle). We might ask ourselves which biquandles could be constructed from two quandles in a similar way. In \cite{EH} it is shown that given two quandles $Q$ and $K$, one may construct a product biquandle with underlying set $Q\times K$, whose operations are induced by the operations on $Q$ and $K$. Product biquandles are classified in \cite[Theorem 5.3]{EH}. 

In the remainder of this Section, we study properties of the topological biquandle $\bi $. It turns out that its structure is quite simpler than that of a general biquandle. 

\begin{lemma} In the topological biquandle, for any $a,b,c\in \bi $ the following holds:
\begin{enumerate}
\setlength{\itemsep}{2pt}
\item Any up- operation commutes with any down- operation,
\item $a\up{b}\upl{b}=a\upl{b}\up{b}=a\dow{b}\dol{b}=a\dol{b}\dow{b}=a$,
\item \begin{xalignat*}{3}
& a\up{b\dow{c}}=a\up{b\dol{c}}=a\up{b} & \quad & a\upl{b\dow{c}}=a\upl{b\dol{c}}=a\upl{b} \\
& a\dow{b\up{c}}=a\dow{b\upl{c}}=a\dow{b} & \quad & a\dol{b\up{c}}=a\dol{b\upl{c}}=a\dol{b}
\end{xalignat*} 
\end{enumerate} 
\label{lemma4}
\end{lemma}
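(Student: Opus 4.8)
The plan is to prove all three items by unwinding the explicit formulas for the four operations on $\bi$. Recall that for $a=[a_0,a_1]$ and $b=[b_0,b_1]$ one has $a\up{b}=[a_0\cdot p_0(b_0,b_1),a_1]$, $a\upl{b}=[a_0\cdot\overline{p_0(b_0,b_1)},a_1]$, $a\dow{b}=[a_0,a_1\cdot p_1(b_0,b_1)]$ and $a\dol{b}=[a_0,a_1\cdot\overline{p_1(b_0,b_1)}]$. Two facts make everything work: first, $p_0(b_0,b_1)=[\overline{b_0}\cdot m_{b_0(0)}\cdot b_0]$ depends only on the first path $b_0$, while $p_1(b_0,b_1)=[\overline{b_1}\cdot m_{b_1(0)}\cdot b_1]$ depends only on the second path $b_1$; second, $p_0$ and $p_1$ take values in the fundamental groups $\pi_1(E_L,z_0)$ and $\pi_1(E_L,z_1)$, so that a representing loop concatenated with its reverse is null-homotopic relative to the base point. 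In particular, an up- or up-bar operation alters only the first coordinate of its left argument, and a down- or down-bar operation alters only the second.

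Granting this, item (2) is immediate: $a\up{b}\upl{b}=[a_0\cdot p_0(b_0,b_1)\cdot\overline{p_0(b_0,b_1)},a_1]=[a_0,a_1]=a$, and $a\upl{b}\up{b}=a\dow{b}\dol{b}=a\dol{b}\dow{b}=a$ follow the same way. For item (1) I would evaluate both compositions directly: for each of the four choices of one up-type and one down-type operation, applying them to $a$ in either order gives the element of $\bi$ whose first coordinate is $a_0$ multiplied by $p_0(b_0,b_1)$ or by its inverse (according as the up-type operation is $\up{\vbox to 0.2cm {\, }}$ or $\upl{\vbox to 0.2cm {\, }}$) and whose second coordinate is $a_1$ multiplied by $p_1(c_0,c_1)$ or by its inverse. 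The order is irrelevant because passing from $a$ to $a\up{b}$ or $a\upl{b}$ does not touch the second coordinate $a_1$, and passing from $a$ to $a\dow{c}$ or $a\dol{c}$ does not touch the first coordinate $a_0$.

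Item (3) uses the first fact above in the form that a down- or down-bar operation does not change the first coordinate, and an up- or up-bar operation does not change the second. Hence $b\dow{c}=[b_0,b_1\cdot p_1(c_0,c_1)]$ and $b\dol{c}=[b_0,b_1\cdot\overline{p_1(c_0,c_1)}]$ both have first coordinate $b_0$, so $p_0(b\dow{c})=p_0(b\dol{c})=p_0(b_0,b_1)$, giving $a\up{b\dow{c}}=a\up{b\dol{c}}=a\up{b}$ and $a\upl{b\dow{c}}=a\upl{b\dol{c}}=a\upl{b}$; symmetrically $b\up{c}$ and $b\upl{c}$ have second coordinate $b_1$, which yields the four identities for the down- and down-bar operations. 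There is no genuine obstacle: the only point to be careful about is that all of these are equalities in the quotient $\bi$, but well-definedness of the operations there is already provided by Lemma \ref{lemma2}, so every computation above may be performed with fixed path representatives. What the lemma really records is the structural observation that, for the topological biquandle, the up-operations and the down-operations act on the two path coordinates independently, through the fundamental-group elements $p_0$ and $p_1$.
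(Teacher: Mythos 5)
Your proof is correct and takes essentially the same route as the paper's: all three items are verified by direct computation with the explicit formulas for the four operations on $\bi$, using that $p_{0}$ depends only on the first path and $p_{1}$ only on the second, so that up-type operations modify only the first coordinate and down-type operations only the second. Your explicit framing of this independence (and the remark that well-definedness on the quotient is already covered by Lemma \ref{lemma2}) is just a cleaner articulation of the observation underlying the paper's calculations.
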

\begin{proof} (1) For any $[a_{0},a_{1}],[b_{0},b_{1}],[c_{0},c_{1}]\in \bi $ we have
\begin{xalignat*}{1}
& \left ([a_{0},a_{1}]^{[b_{0},b_{1}]}\right )_{[c_{0},c_{1}]}=[a_{0}\cdot p_{0}(b_{0},b_{1}),a_{1}\cdot p_{1}(c_{0},c_{1})]=\left ([a_{0},a_{1}]_{[c_{0},c_{1}]}\right )^{[b_{0},b_{1}]}\;,
\end{xalignat*} and similar equalities hold for the up- bar and down- bar operations.  \\
(2) Let $a=[a_{0},a_{1}],b=[b_{0},b_{1}]\in \bi $ and compute 
\begin{xalignat*}{1}
& a\up{b}\upl{b}=\left ([a_{0},a_{1}]^{[b_{0},b_{1}]}\right )^{\overline{[b_{0},b_{1}]}}=[a_{0}\cdot \overline{b}_{0}m_{b_{0}(0)}b_{0}\cdot \overline{b}_{0}\overline{m}_{b_{0}(0)}b_{0},a_{1}]=[a_{0},a_{1}]=a\;,
\end{xalignat*} and similarly in the other three cases.  \\
(3)  We have $$[a_{0},a_{1}]^{\left ([b_{0},b_{1}]_{[c_{0},c_{1}]}\right )}=[a_{0},a_{1}]^{[b_{0},b_{1}\cdot p_{1}(c_{0},c_{1})]}=[a_{0}\cdot \overline{b}_{0}m_{b_{0}(0)}b_{0},a_{1}]=[a_{0},a_{1}]^{[b_{0},b_{1}]}\;,$$ and similar calculations settle the other  cases.
\end{proof}

\begin{proposition} \label{prop1} Let $(X,\up{\vbox to 0.2cm {\, }},\dow{\vbox to 0.2cm {\, }})$ be any biquandle in which the equalities $a\up{b\dow{c}}=a\up{b}$, $a\upl{b\dow{c}}=a\upl{b}$, $a\dow{b\up{c}}=a\dow{b}$ and $a\dol{b\up{c}}=a\dol{b}$ are valid for any $a,b,c\in X$. Then \begin{enumerate}
\item the equalities (3) from Lemma \ref{lemma4} are valid for any $a,b,c\in X$, 
\item for any $a,b\in X$ we have $a\up{b}\upl{b}=a\upl{b}\up{b}=a\dow{b}\dol{b}=a\dol{b}\dow{b}=a$,
\item any up- operation on $X$ commutes with any down- operation,
\item for any $a,b,c\in X$ we have $a\up{b\up{c}}=a\upl{c}\up{b}\up{c}$ and $a\dow{b\dow{c}}=a\dol{c}\dow{b}\dow{c}$. 
\end{enumerate}
\end{proposition}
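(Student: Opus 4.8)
The plan is to prove the four assertions in the stated order, since (1) is used in the proof of (2), and (2) is used in the proofs of (3) and (4); the recurring device is to collapse the biquandle axioms of Definition~\ref{def1} using the four hypothesised ``absorption'' identities, with Lemma~\ref{lemma0} bridging the bar and non-bar operations. For (1), what is left to prove beyond the hypotheses are $a\up{b\dol c}=a\up b$, $a\upl{b\dol c}=a\upl b$, $a\dow{b\upl c}=a\dow b$ and $a\dol{b\upl c}=a\dol b$. Lemma~\ref{lemma0}, applied with $a,b$ replaced by $b,c$, gives $b=(b\dol c)\dow{c\upl b}$ and $b=(b\upl c)\up{c\dol b}$; substituting the first of these for $b$ in $a\up b$ and using the hypothesis $x\up{y\dow z}=x\up y$ yields $a\up b=a\up{b\dol c}$, the same substitution with $x\upl{y\dow z}=x\upl y$ yields the $\upl$ statement, and substituting the second identity for $b$ in $a\dow b$ and in $a\dol b$ and using $x\dow{y\up z}=x\dow y$ and $x\dol{y\up z}=x\dol y$ settles the remaining two. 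For (2), use Lemma~\ref{lemma0} once more: $a\up b\upl{b\dow a}=a$ together with the hypothesis $x\upl{y\dow z}=x\upl y$ gives $a\up b\upl b=a$; $a\upl b\up{b\dol a}=a$ together with $x\up{y\dol z}=x\up y$ from (1) gives $a\upl b\up b=a$; $a\dow b\dol{b\up a}=a$ with $x\dol{y\up z}=x\dol y$ gives $a\dow b\dol b=a$; and $a\dol b\dow{b\upl a}=a$ with $x\dow{y\upl z}=x\dow y$ from (1) gives $a\dol b\dow b=a$. In particular $\up b$ and $\upl b$ are mutually inverse self-maps of $X$, and so are $\dow b$ and $\dol b$.

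For (3), feed the hypotheses into the Rule of Five axiom $a\dow b\up{c\dow{b\up a}}=a\up c\dow{b\up{c\dow a}}$: on the left, $c\dow{b\up a}=c\dow b$ by $x\dow{y\up z}=x\dow y$ and then $a\dow b\up{c\dow b}=a\dow b\up c$ by $x\up{y\dow z}=x\up y$; on the right, $b\up{c\dow a}=b\up c$ and then $a\up c\dow{b\up c}=a\up c\dow b$; hence $(a\dow b)\up c=(a\up c)\dow b$, i.e. $\dow b$ commutes with $\up c$ for all $b,c$. The three remaining commutations, in which one or both operations are barred, follow formally from (2): for instance, to see that $\dow b$ commutes with $\upl c$, apply $\up c$ to $(a\upl c)\dow b$, use the commutation just established and $(a\upl c)\up c=a$ to get $((a\upl c)\dow b)\up c=a\dow b$, then apply $\upl c$; the cases involving $\dol b$ are handled the same way using that $\dow b$ and $\dol b$ are mutually inverse.

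For (4), simplify Up~Interchanges, $a\up b\up c=a\up{c\dow b}\up{b\up c}$, via $a\up{c\dow b}=a\up c$ to the relation $(a\up b)\up c=(a\up c)\up{b\up c}$, valid for all $a,b,c$; replacing $a$ by $a\upl c$ and using $(a\upl c)\up c=a$ from (2) gives $a\up{b\up c}=((a\upl c)\up b)\up c=a\upl c\up b\up c$. The second identity is the down-analogue: simplify Down~Interchanges to $(a\dow b)\dow c=(a\dow c)\dow{b\dow c}$ via $a\dow{c\up b}=a\dow c$, replace $a$ by $a\dol c$, and use $(a\dol c)\dow c=a$. The only real subtlety is the sequencing: two of the four equalities in (2) genuinely need the barred absorption identities proved in (1), and the barred cases of (3) together with all of (4) need that the bar operations invert the plain ones, which is (2). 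Once this order is fixed, every individual step is a one-line substitution, so I anticipate no genuine obstacle beyond keeping careful track of which absorption identity is invoked at each step.
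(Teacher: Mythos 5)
Your proposal is correct and follows essentially the same route as the paper: part (1) by substituting the Lemma~\ref{lemma0} identities and absorbing, part (2) by rewriting $\upl{b}$ as $\upl{b\dow{a}}$ (etc.) and invoking Lemma~\ref{lemma0}, part (3) by collapsing the Rule of Five and then transferring to the barred cases via the inverses from (2), and part (4) by collapsing the Interchange axioms and substituting $a\upl{c}$ (resp.\ $a\dol{c}$) for $a$. The only cosmetic difference is in the barred cases of (3), where the paper re-runs the Rule of Five for one case while you deduce all of them formally from the unbarred commutation plus (2); both are valid.
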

\begin{proof} Let $X$ be a biquandle with the prescribed property. To prove (1), we use Lemma \ref{lemma0} to compute $a\up{b\dol{c}}=a\up{b\dol{c}\dow{c\upl{b}}}=a\up{b}$ and similarly for the other three cases. \\
To prove (2), choose elements $a,b\in X$ and use Lemma \ref{lemma0} to compute 
\begin{xalignat*}{2}
& a\up{b}\upl{b}=a\up{b}\upl{b\dow{a}}=a\;, &  a\upl{b}\up{b}=a\upl{b}\up{b\dol{a}}=a\;,\\
& a\dow{b}\dol{b}=a\dow{b}\dol{b\up{a}}=a\;, &  a\dol{b}\dow{b}=a\dol{b}\dow{b\upl{a}}=a\;.
\end{xalignat*}
To prove (3), choose elements $a,b,c\in X$ and use the second equality of the 3.Biquandle axiom to compute 
\begin{xalignat}{1} \label{eq1}
& a\up{c}\dow{b}=a\up{c}\dow{b\up{c\dow{a}}}=a\dow{b}\up{c\dow{b\up{a}}}=a\dow{b}\up{c}\;.
\end{xalignat} Now write $x=a\upl{b}\dol{c}$ and use (2) together with \eqref{eq1} to obtain $a=x\dow{c}\up{b}=x\up{b}\dow{c}$, which implies $x=a\dol{c}\upl{b}=a\upl{b}\dol{c}$.

Writing $y=a\upl{c}\dow{b}$, we use (2) and the second equality of the 3.Biquandle axiom to compute $$y\up{c}=y\up{c\dow{b\up{a}}}=a\upl{c}\dow{b}\up{c\dow{b\up{a}}}=a\upl{c}\up{c}\dow{b\up{c\dow{a}}}=a\dow{b}\;,$$ which implies $y=a\dow{b}\upl{c}=a\upl{c}\dow{b}$.  

Finally, write $z=a\up{b}\dol{c}$ and use the previously proved equality to obtain $a=z\dow{c}\upl{b}=z\upl{b}\dow{c}$, which implies $z=a\dol{c}\up{b}=a\up{b}\dol{c}$. 

To prove (4), choose elements $x,y,z\in X$ and use the first equality of the 3.Biquandle axiom to compute $x\up{y}\up{z\up{y}}=x\up{y\dow{z}}\up{z\up{y}}=x\up{z}\up{y}$ and putting $a=x\up{y}$, $b=z$ and $c=y$ gives $a\up{b\up{c}}=a\upl{c}\up{b}\up{c}$. Similarly, the third equality of the 3.Biquandle axiom gives $x\dow{y}\dow{z\dow{y}}=x\dow{y\up{z}}\dow{z\dow{y}}=x\dow{z}\dow{y}$ and putting $a=x\dow{y}$, $b=z$ and $c=y$ implies $a\dow{b\dow{c}}=a\dol{c}\dow{b}\dow{c}$.
\end{proof}

Part (3) of Lemma \ref{lemma4} together with Proposition \ref{prop1} implies the following:

\begin{corollary} \label{cor1} Let $A$ be a generating set of the topological biquandle $\bi $. Any element of $\bi $ can be expressed in the form $a\up{w_{1}}\dow{w_{2}}$, where $a\in A$ and $w_{i}$ is a word in $F(A)$ for $i=1,2$.  
\end{corollary}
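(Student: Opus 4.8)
The plan is to argue by induction on the number of biquandle operations needed to build an element of $\bi$ out of the generating set. Since $A$ generates $\bi$, every element of $\bi$ is a finite expression in elements of $A$ using the operations $\up{\vbox to 0.2cm {\, }}$, $\dow{\vbox to 0.2cm {\, }}$, $\upl{\vbox to 0.2cm {\, }}$, $\dol{\vbox to 0.2cm {\, }}$; I would assign to each element the minimal number of operation symbols occurring in such an expression for it and induct on this number. It is convenient to set up notation: for a word $w = c_{1}c_{2}\cdots c_{k}\in F(A)$, write $x\up{w}$ for $x\up{c_{1}}\up{c_{2}}\cdots\up{c_{k}}$, where a letter $c_{j}=a^{-1}$ is read as the bar-operation $\upl{a}$, and $x\up{1}=x$ for the empty word; define $x\dow{w}$ analogously. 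This is well defined by Lemma \ref{lemma4}(2), which says that $\up{b}$ and $\upl{b}$ are mutually inverse bijections for each $b\in A$.

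The base case is trivial: an element requiring no operations is a generator $a\in A$, and $a = a\up{1}\dow{1}$. For the inductive step I would take an element written as $z\circledast t$, where $\circledast$ is one of the four operations and $z,t\in\bi$ require fewer operations, so that by the inductive hypothesis $z = a\up{w_{1}}\dow{w_{2}}$ and $t = a'\up{v_{1}}\dow{v_{2}}$ with $a,a'\in A$ and $w_{i},v_{i}\in F(A)$.

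The heart of the proof is to show that operating on $z$ by the possibly complicated element $t$ amounts to operating on $z$ by a word over the \emph{generators}. Consider first the element $z\up{t}$. Using Lemma \ref{lemma4}(3) in the form $x\up{s\dow{c}} = x\up{s} = x\up{s\dol{c}}$, one letter of $v_{2}$ at a time, I would first delete the down-part of $t$, obtaining $z\up{t} = z\up{a'\up{v_{1}}}$. Then I would flatten $z\up{a'\up{v_{1}}}$ by repeatedly peeling the outermost letter of $v_{1}$ off the acting element: Proposition \ref{prop1}(4) gives $x\up{s\up{c}} = x\upl{c}\up{s}\up{c}$, and a short computation using Proposition \ref{prop1}(2) and (4) gives the bar-variant $x\up{s\upl{c}} = x\up{c}\up{s}\upl{c}$. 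Each such step shortens the acting element by one letter while inserting one up-type operation by a generator on each side of $z$, so after $|v_{1}|$ steps one reaches $z\up{t} = z\up{u}$ with $u = v_{1}^{-1}a'v_{1}\in F(A)$. The bar-operation case is handled the same way and yields $z\upl{t} = z\up{u^{-1}}$, and the two down-cases are symmetric — now first peeling the down-letters of $v_{2}$ with the down-analogue $x\dow{s\dow{c}} = x\dol{c}\dow{s}\dow{c}$ of Proposition \ref{prop1}(4) (and its bar-variant) and then deleting the up-part with Lemma \ref{lemma4}(3) — giving $z\dow{t} = z\dow{u'}$ and $z\dol{t} = z\dow{u'^{-1}}$ with $u' = v_{2}^{-1}a'v_{2}$. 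Finally I would substitute $z = a\up{w_{1}}\dow{w_{2}}$ and use Lemma \ref{lemma4}(1), that every up-operation commutes with every down-operation, to move the newly produced operations past the ones of the wrong type: for instance $z\up{t} = a\up{w_{1}}\dow{w_{2}}\up{u} = a\up{w_{1}}\up{u}\dow{w_{2}} = a\up{w_{1}u}\dow{w_{2}}$, and likewise $z\upl{t} = a\up{w_{1}u^{-1}}\dow{w_{2}}$, $z\dow{t} = a\up{w_{1}}\dow{w_{2}u'}$ and $z\dol{t} = a\up{w_{1}}\dow{w_{2}(u')^{-1}}$. Each of these is of the asserted form, which closes the induction.

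I expect the only genuine difficulty to be the bookkeeping in the flattening step: verifying that iterating Proposition \ref{prop1}(4) on the nested up-operations coming from $t$ really does terminate in a single flat word $u\in F(A)$, and keeping track of which operations become barred. The derivation of the bar-variants of Proposition \ref{prop1}(4) is routine, and everything else is a direct appeal to Lemma \ref{lemma4}.
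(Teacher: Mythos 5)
Your argument is correct and follows exactly the route the paper intends: the paper gives no explicit proof, asserting only that the corollary follows from part (3) of Lemma \ref{lemma4} together with Proposition \ref{prop1}, and your induction is a careful working-out of precisely that implication (deleting the irrelevant half of the acting element via Lemma \ref{lemma4}(3), flattening nested operations via Proposition \ref{prop1}(4) and its bar-variants, and commuting up- past down-operations via Proposition \ref{prop1}(3)). The bookkeeping, including the conjugation formula $u=v_{1}^{-1}a'v_{1}$ and the well-definedness of $x\up{w}$ for $w\in F(A)$ via Proposition \ref{prop1}(2), checks out.
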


\end{section}

\begin{section}{Presentation of the topological biquandle}
\label{sec3}
Recall the setting described at the beginning of Section \ref{sec2}. For a link $L$ in $S^{3}$, we have chosen a regular neighborhood $N_{L}$ and fixed an orientation of the normal bundle of $L$. We have also chosen the basepoints $z_{0}$ and $z_{1}$, which represent two antipodal points of the boundary sphere of a 3-ball neighborhood of $N_{L}$. Choose a coordinate system in which the points $z_{0}$ and $z_{1}$ have coordinates $(0,0,1)$ and $(0,0,-1)$ respectively, and let $D$ be the diagram of $L$ obtained by projection to the plane $z=0$.  

As before, we denote by $A(D)$ the set of semiarcs and by $C(D)$ the set of crossings of the diagram $D$. We would like to find a presentation of the topological biquandle $\bi $ in terms of the link diagram.

For any $a,b,c\in A(D)$, denote by $R_{a,b,c}$ the set of relations 
\begin{xalignat*}{1}
R_{a,b,c}=\left \{ a\up{b\dow{c}}=a\up{b},\, a\upl{b\dow{c}}=a\upl{b},\, a\dow{b\up{c}}=a\dow{b},\, a\dol{b\up{c}}=a\dol{b}\right \}
\end{xalignat*}

\begin{Theorem} Let $D$ be a diagram of a link $L$ in $S^{3}$. Then $$\left \langle A(D)|\, \textrm{crossing relations for each $c\in C(D)\,, R_{a,b,c}$ for each $a,b,c\in A(D)$}\right \rangle $$ is a presentation of the topological biquandle $\bi $. 
\end{Theorem}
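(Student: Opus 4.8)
The plan is to show that the biquandle $X$ presented by $\langle A(D)\mid \text{crossing relations},\ R_{a,b,c}\ \text{for all}\ a,b,c\in A(D)\rangle$ — which is tautologically a quotient of the fundamental biquandle $BQ(L)$ — is isomorphic to $\bi$, by constructing a comparison homomorphism $\Phi\colon X\to\bi$ and proving it bijective. First I would attach to each semiarc $a\in A(D)$ a \emph{standard element} $\beta(a)=[a_{0},a_{1}]\in\bi$ in the style of Matveev: pick an interior point of $a$, push it slightly off $L$ to a point $q_{a}\in\partial N_{L}$, and let $a_{0}$ (resp.\ $a_{1}$) descend monotonically in the $z$-direction from $q_{a}$ to $z_{0}=(0,0,1)$ (resp.\ to $z_{1}=(0,0,-1)$); this is legitimate because $L$ lies in a thin slab $\{|z|<\varepsilon\}$ and $q_{a}$ sits directly over (or under) the interior of $a$, away from all crossings. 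I would then verify, one crossing at a time, that the four standard elements meeting at a crossing satisfy the crossing relations of Figure \ref{fig:slika5} inside $\bi$: this is the geometric heart, and it is the exact analogue of Matveev's computation — at a crossing the understrand runs beneath the overstrand, so the two ``up'' paths of the understrand semiarcs differ by a loop around the overstrand, which is precisely the meridian $p_{0}$ of the overstrand semiarc, while their ``down'' paths are homotopic rel $\partial N_{L}$; dually for the overstrand, and symmetrically (with the bar operations) for the opposite crossing sign. Since Lemma \ref{lemma4}(3) already says the relations $R_{a,b,c}$ hold among the $\beta(a)$, the universal property of the presentation yields a unique biquandle homomorphism $\Phi\colon X\to\bi$ with $\Phi(a)=\beta(a)$ for each $a$.

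To see $\Phi$ is surjective, I would show the standard elements generate $\bi$. Given $[a_{0},a_{1}]\in\bi$, first slide the common basepoint $a_{0}(0)$ along its boundary torus to some $q_{a}$, so that $a_{0}\simeq a_{0}^{\mathrm{std}}\cdot\gamma_{0}$ and $a_{1}\simeq a_{1}^{\mathrm{std}}\cdot\gamma_{1}$ with $\gamma_{j}\in\pi_{1}(E_{L},z_{j})$. The elements $p_{0}(c^{\mathrm{std}})$, $c\in A(D)$, contain the Wirtinger generators and so generate $\pi_{1}(E_{L},z_{0})$; writing $\gamma_{0}$ as a word in them and using that right-concatenating $a_{0}$ by $p_{0}(c^{\mathrm{std}})$ (resp.\ by its reverse) is exactly the operation $\up{\beta(c)}$ (resp.\ $\upl{\beta(c)}$), and symmetrically on the $z_{1}$-side for the down operations, exhibits $[a_{0},a_{1}]$ as a biquandle word in the $\beta(c)$.

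For injectivity the plan is to construct the inverse $\Psi\colon\bi\to X$. Because the relations $R_{a,b,c}$ are imposed for generators, $X$ satisfies the hypotheses of Proposition \ref{prop1} — the generator relations propagate to arbitrary elements by an induction on word length — so by Corollary \ref{cor1} together with the cancellations $\up{c}\upl{c}=\mathrm{id}$ etc., every element of $X$ can be put in the form $a\up{v_{1}}\dow{v_{2}}$ with $a\in A(D)$ and $v_{1},v_{2}$ words in the free group on $A(D)$. Given $[a_{0},a_{1}]\in\bi$, perform the normalization above and set $\Psi[a_{0},a_{1}]:=a\up{v_{1}}\dow{v_{2}}\in X$, where $v_{j}$ is a Wirtinger word representing $\gamma_{j}$. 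One then checks $\Psi$ is well defined by accounting for every ambiguity in this recipe as a consequence of the presentation: the Wirtinger word $v_{j}$ for $\gamma_{j}$ is unique modulo the normal closure of the Wirtinger relators, and those relators, applied to meridians, are exactly the crossing relations; the choice of base semiarc and all homotopies witnessing an equality $[a_{0},a_{1}]=[a_{0}',a_{1}']$ in $\bi$ break up into plain path-homotopies (absorbed by the previous point) and slidings of the midpoint over a peripheral torus, where a slide around a meridian is absorbed by the relations $R_{a,b,c}$ — this being precisely the algebraic shadow of the fact that $p_{0}$ ignores the $z_{1}$-side path and $p_{1}$ ignores the $z_{0}$-side path — while a slide around a longitude unwinds into crossing relations. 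Once $\Psi$ is well defined, $\Psi\circ\Phi=\mathrm{id}$ on generators, hence on all of $X$, so $\Phi$ is injective; being also surjective it is an isomorphism, and since $X$ carries the claimed presentation, this is exactly the theorem.

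I expect the main obstacle to be precisely the well-definedness of $\Psi$: translating the homotopy-theoretic equivalence defining $\bi$ into finitely many algebraic moves over the presentation, i.e.\ proving that the crossing relations together with the $R_{a,b,c}$ really do account for every identification coming from a homotopy and from sliding the midpoint over the peripheral tori. Subsidiary points that must be pinned down are that $R_{a,b,c}$ for generators forces the full Proposition \ref{prop1} structure on $X$, and, on the topological side, that meridian- and longitude-slides exhaust the peripheral identifications — a mild use of the peripheral structure of link complements.
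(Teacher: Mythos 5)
Your proposal follows essentially the same route as the paper: you attach to each semiarc the same standard element of $\bi$, verify the crossing relations geometrically at each crossing and the relations $R_{a,b,c}$ via Lemma \ref{lemma4}(3), and invert by reading off a normal form $a\up{v_{1}}\dow{v_{2}}$ from a projected representative --- the paper packages this as a direct verification of the universal property (its map $\widetilde{f}$ into an arbitrary biquandle $Y$ satisfying the relations is exactly your $\Psi$ with $Y=X$), and it carries out the well-definedness check you defer by enumerating three critical stages of a homotopy (motion of the initial point across a crossing, cancellation of a doubled over- or under-passage, and passage of $\gamma_{0}$ or $\gamma_{1}$ past a crossing), each resolved by the crossing relations together with the consequences of $R_{a,b,c}$ collected in Proposition \ref{prop1}. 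The one point you rightly flag and that must actually be proved --- in your argument and, implicitly, in the paper's --- is that imposing $R_{a,b,c}$ only on generators propagates to the hypotheses of Proposition \ref{prop1} for all elements involved.
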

\begin{proof} Let $R=\{\textrm{crossing relations for each }c\in C(D)\,, R_{a,b,c}$ for each $a,b,c\in A(D)\}$. We will define a map $j\colon A(D)\to \bi $ such that \begin{enumerate}
\item $(\overline{j}\times \overline{j})(R)\subset \Delta _{\bi }$,
\item for any biquandle $Y$ and for any map $f\colon A(D)\to Y$ such that $(\overline{f}\times \overline{f})(R)\subset \Delta _{Y}$, there exists a unique biquandle homomorphism $\widetilde{f}\colon \bi \to Y$ such that $f=\widetilde{f}\circ j$.  
\end{enumerate} 
For a semiarc $a\in A(D)$, let $j(a)=[a_{0},a_{1}]$, where $a_{0}$ is any path from the parallel curve to the semiarc $a$ to $z_{0}$ that passes \underline{over} all the other arcs of the diagram, and $a_{1}$ is a path from $a_{0}(0)$ to $z_{1}$ that passes \underline{under} all the other arcs of the diagram. 
\begin{enumerate}
\item[(proof of 1.)]
By definition of a free biquandle, there exists a unique biquandle homomorphism \\$\overline{j}\colon F_{BQ}(A(D))\to \bi $ that extends the map $j$, and it is given by \begin{xalignat*}{2}
& \overline{j}(a\up{b})=j(a)^{j(b)}=[a_{0},a_{1}]^{[b_{0},b_{1}]}\,, &  \overline{j}(a\dow{b})=j(a)_{j(b)}=[a_{0},a_{1}]_{[b_{0},b_{1}]}\;.
\end{xalignat*} It follows from Lemma \ref{lemma01} that $\overline{j}$ also satisfies 
\begin{xalignat*}{2}
& \overline{j}(a\upl{b})=j(a)^{\overline{j(b)}}=[a_{0},a_{1}]^{\overline{[b_{0},b_{1}]}}\;, &  \overline{j}(a\dol{b})=j(a)_{\overline{j(b)}}=[a_{0},a_{1}]_{\overline{[b_{0},b_{1}]}}\;.
\end{xalignat*} For any $a,b,c\in A(D)$, we use part (3) of Lemma \ref{lemma4} to compute $$\overline{j}(a\up{b\dow{c}})=j(a)\up{j(b)\dow{j(c)}}=j(a)\up{j(b)}=\overline{j}(a\up{b})\;,$$ and a similar computation shows that the homomorphism $\overline{j}$ preserves every relation from the set $R_{a,b,c}$. 

At every positive crossing of the diagram $D$, the outgoing semiarcs $c$ and $d$ are related to the incoming semiarcs $a$ and $b$ by two crossing relations $c=a\up{b}$ and $d=b\dow{a}$ (see the left part of Figure \ref{fig:slika5}). Figure \ref{fig:slika3} shows a homotopy between $\overline{j}(a\up{b})$ and $\overline{j}(c)$ and another homotopy between $\overline{j}(b\dow{a})$ and $\overline{j}(d)$. 
\begin{figure}[h!]
\labellist
\normalsize \hair 2pt
\pinlabel $z_{0}$ at 220 650
\pinlabel $z_{1}$ at 180 5
\pinlabel $c_{0}\simeq a_{0}\overline{b}_{0}m_{b_{0}(0)}b_{0}$ at 40 520 
\pinlabel $c_{1}\simeq a_{1}$ at 110 70
\pinlabel $b_{0}$ at 220 440 
\pinlabel $a_{0}$ at 290 520
\pinlabel $a_{1}$ at 250 70 
\pinlabel $z_{0}$ at 750 650
\pinlabel $z_{1}$ at 765 0
\pinlabel $b_{0}$ at 675 520 
\pinlabel $b_{1}$ at 685 90
\pinlabel $a_{1}$ at 730 150 
\pinlabel $d_{0}\simeq b_{0}$ at 870 520
\pinlabel $d_{1}\simeq b_{1}\overline{a}_{1}m_{a_{1}(0)}a_{1}$ at 915 90 
\endlabellist
\begin{center}
\includegraphics[scale=0.5]{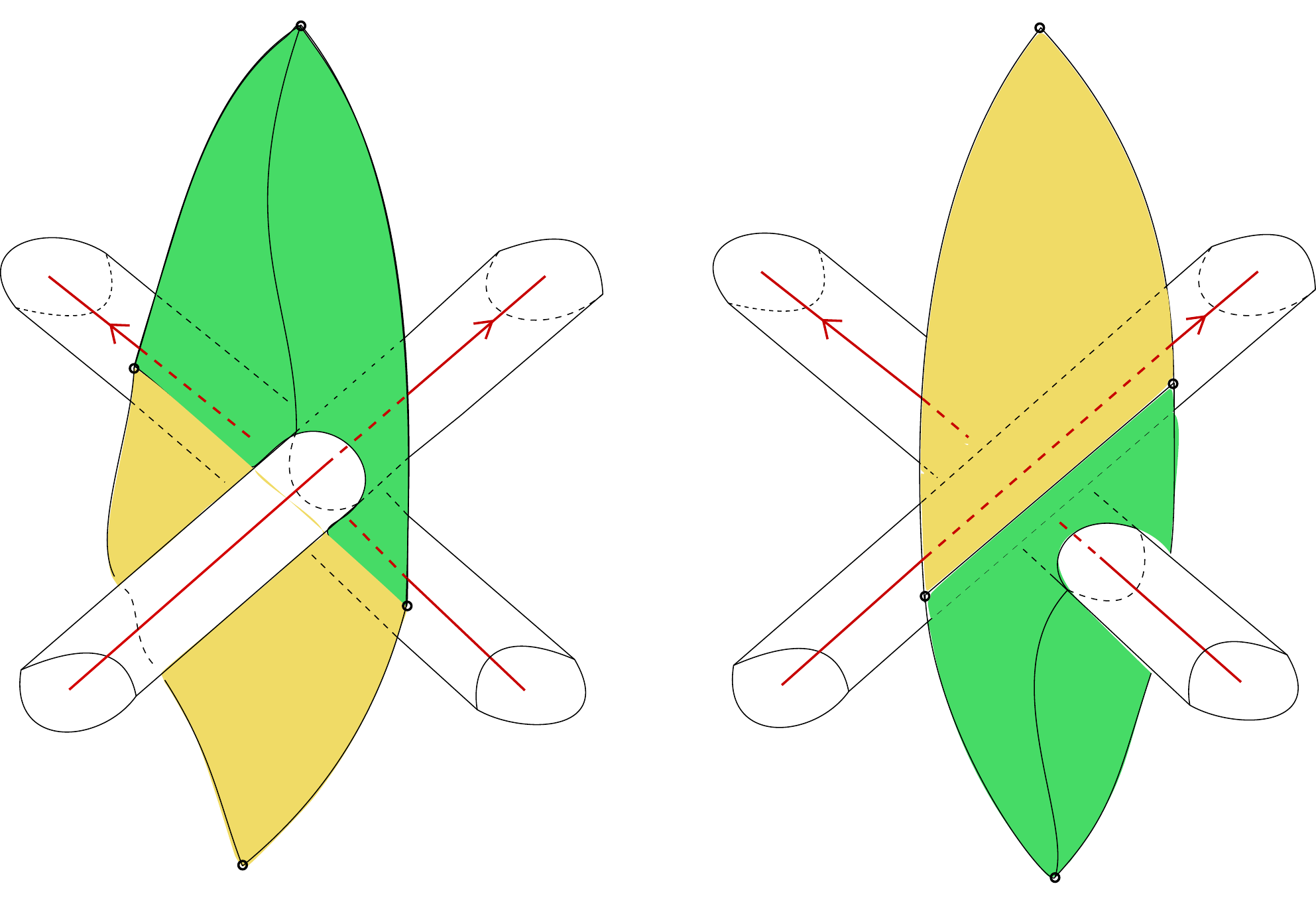}
\caption{An illustration of the crossing relations $[a_{0},a_{1}]^{[b_{0},b_{1}]}=[c_{0},c_{1}]$ and $[b_{0},b_{1}]_{[a_{0},a_{1}]}=[d_{0},d_{1}]$}
\label{fig:slika3}
\end{center}
\end{figure}

At every negative crossing of the diagram $D$, the outgoing semiarcs $c$ and $d$ are related to the incoming semiarcs $a$ and $b$ by two relations $c=a\upl{b}$ and $d=b\dol{a}$ (see the right part of Figure \ref{fig:slika5}). Figure \ref{fig:slika6} shows a homotopy between $\overline{j}(a\upl{b})$ and $\overline{j}(c)$ and another homotopy between $\overline{j}(b\dol{a})$ and $\overline{j}(d)$. This shows that $(\overline{j}\times \overline{j})(R)\subset \Delta _{\bi }$. 
\begin{figure}[h!]
\labellist
\normalsize \hair 2pt
\pinlabel $z_{0}$ at 220 650
\pinlabel $z_{1}$ at 265 10
\pinlabel $c_{0}\simeq a_{0}\overline{b}_{0}\overline{m}_{b_{0}(0)}b_{0}$ at 385 540 
\pinlabel $c_{1}\simeq a_{1}$ at 325 80
\pinlabel $b_{0}$ at 210 440 
\pinlabel $a_{0}$ at 150 540
\pinlabel $a_{1}$ at 190 80 
\pinlabel $z_{0}$ at 720 650
\pinlabel $z_{1}$ at 710 0
\pinlabel $b_{0}$ at 790 530 
\pinlabel $b_{1}$ at 780 90
\pinlabel $a_{1}$ at 740 150 
\pinlabel $d_{0}\simeq b_{0}$ at 600 530
\pinlabel $d_{1}\simeq b_{1}\overline{a}_{1}\overline{m}_{a_{1}(0)}a_{1}$ at 570 90 
\endlabellist
\begin{center}
\includegraphics[scale=0.5]{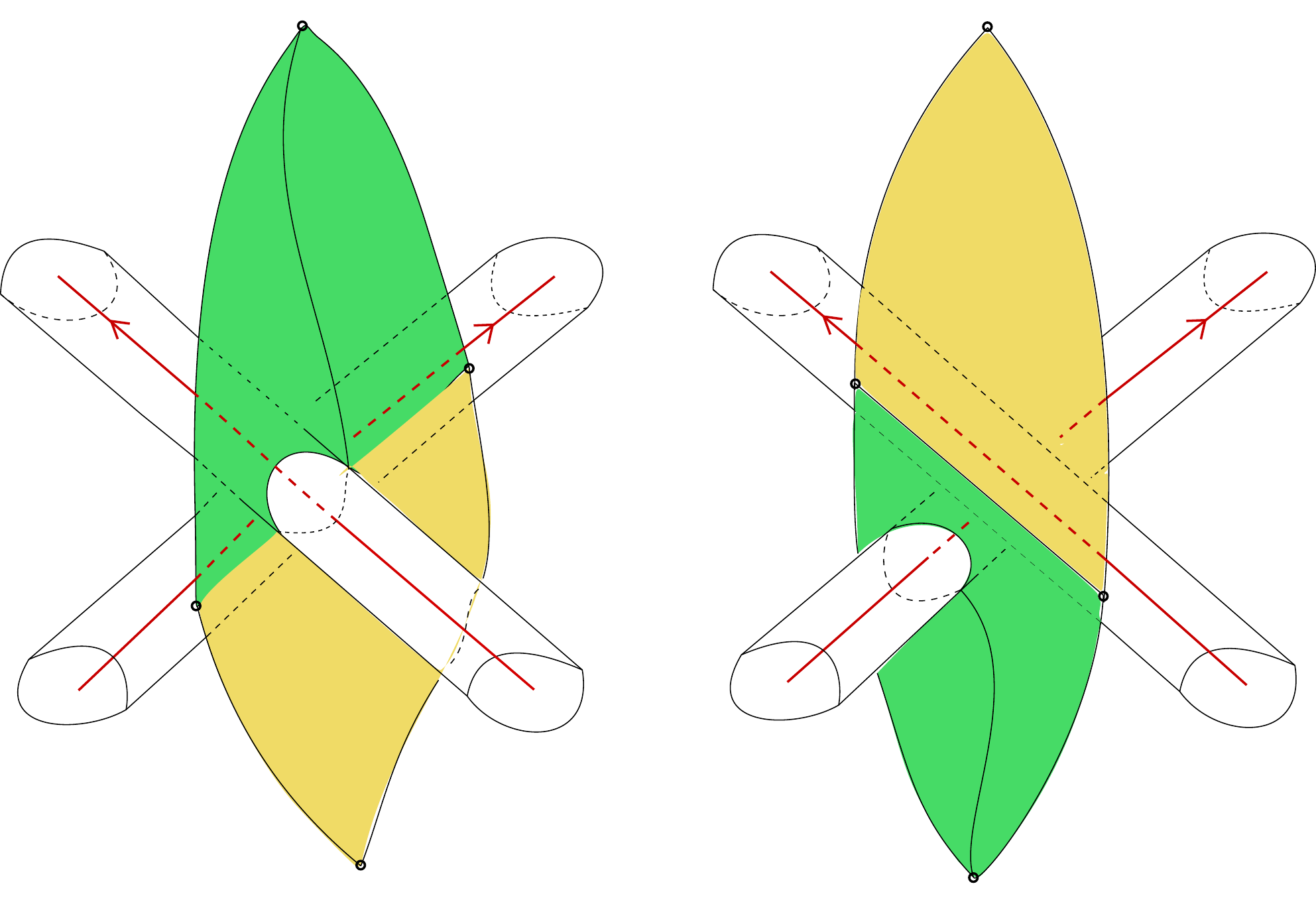}
\caption{An illustration of the crossing relations $[a_{0},a_{1}]^{\overline{[b_{0},b_{1}]}}=[c_{0},c_{1}]$ and $[b_{0},b_{1}]_{\overline{[a_{0},a_{1}]}}=[d_{0},d_{1}]$}
\label{fig:slika6}
\end{center}
\end{figure}

\item[(proof of 2.)] Suppose $Y$ is a biquandle and choose any map $f\colon A(D)\to Y$ such that $(\overline{f}\times \overline{f})(R)\subset \Delta _{Y}$. An element of $\bi $ is represented by a pair $(\gamma _{0},\gamma _{1})$, where $\gamma _{i}$ is a path in $E_{L}$ from a point in $\partial N_{L}$ to $z_{i}$ for $i=0,1$ and $\gamma _{0}(0)=\gamma _{1}(0)$. Project the paths $\gamma _{0}$, $\gamma _{1}$ in general position onto the plane of the diagram $D$. Suppose that the initial point $\gamma _{0}(0)=\gamma _{1}(0)$ lies on the parallel curve to the semiarc $a$ and suppose that $\gamma _{0}$ subsequently passes \underline{under} the semiarcs labelled by $b_{1},b_{2}\ldots ,b_{m}$, while $\gamma _{1}$  subsequently passes \underline{over} the semiarcs labelled by $c_{1},c_{2}\ldots ,c_{n}$. Define $$\widetilde{f}([\gamma _{0},\gamma _{1}]):=f(a)\up{f(b_{1})^{\epsilon _{1}}\ldots f(b_{m})^{\epsilon _{m}}}\dow{f(c_{1})^{\phi _{1}}\ldots f(c_{n})^{\phi _{n}}}\;,$$ where $\epsilon _{i}$ denotes the sign of the crossing between $\gamma _{0}$ and its overlying semiarc $b_{i}$, while $\phi _{i}$ denotes the sign of the crossing between $\gamma _{1}$ and its underlying semiarc $c_{i}$.     

It follows from the above definition of $\widetilde{f}$ that for any $a\in A(D)$, we have $(\widetilde{f}\circ j)(a)=\widetilde{f}([a_{0},a_{1}])=f(a)$, therefore $\widetilde{f}\circ j=f$. We need to show that $\widetilde{f}$ is a well defined map on $\bi $ and that it is a biquandle homomorphism. To show that $\widetilde{f}$ is well defined, we have to check that any representative of the equivalence class $[\gamma _{0},\gamma _{1}]$ gives the same value of $\widetilde{f}$. During a homotopy from $(\gamma _{0},\gamma _{1})$ to another representative $(\alpha _{0},\alpha _{1})$, the following critical stages may occur: \begin{enumerate}
\item The initial point $\gamma _{0}(0)=\gamma _{1}(0)$ moves to another semiarc. 

\begin{figure}[h!]
\labellist
\normalsize \hair 2pt
\pinlabel $a$ at 370 220
\pinlabel $d$ at 70 105 
\pinlabel $c$ at 290 10
\pinlabel $z_{1}$ at 400 -20
\pinlabel $z_{0}$ at 260 420
\pinlabel $\gamma _{0}$ at 310 240 
\pinlabel $\gamma _{1}$ at 380 80
\pinlabel $\alpha _{0}$ at 120 180
\pinlabel $\alpha _{1}$ at 180 20
\endlabellist
\begin{center}
\includegraphics[scale=0.4]{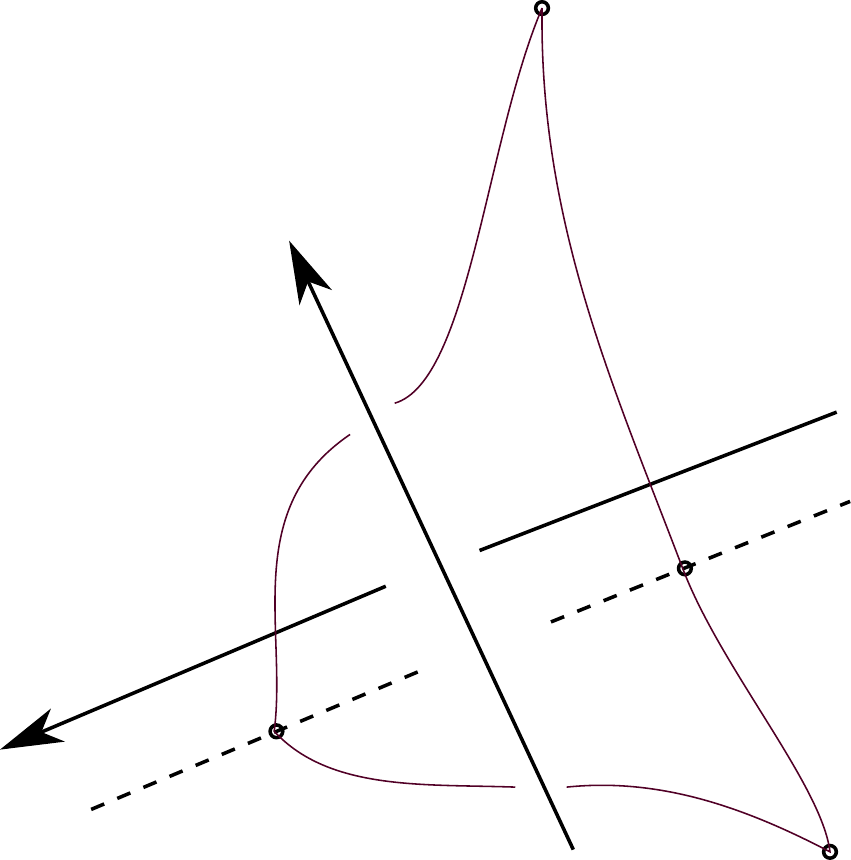}
\caption{The invariance of $\widetilde{f}$ - change of initial point}
\label{fig:slika7}
\end{center}
\end{figure}
First suppose that the initial point of $\gamma _{i}$ is at the semiarc $a$, while the initial point of $\alpha _{i}$ is at the semiarc $d$ where $a\up{c}=d$ (see Figure \ref{fig:slika7}). Since $(\overline{f}\times \overline{f})(R)\subset \Delta _{Y}$, we have $f(a)\up{f(c)}=f(d)$. Writing $\widetilde{f}([\gamma _{0},\gamma _{1}])=f(a)\up{w_{1}}\dow{w_{2}}$, we use Lemma \ref{lemma0} to obtain $\widetilde{f}([\alpha _{0},\alpha _{1}])=f(d)\upl{\overline{f}(c\dow{a})}\up{w_{1}}\dow{w_{2}}=f(a)\up{f(c)}\upl{f(c)\dow{f(a)}}\up{w_{1}}\dow{w_{2}}=f(a)\up{w_{1}}\dow{w_{2}}=\widetilde{f}([\gamma _{0},\gamma _{1}])$. 
\begin{figure}[h!]
\labellist
\normalsize \hair 2pt
\pinlabel $a$ at 370 190
\pinlabel $d$ at 70 65 
\pinlabel $c$ at 150 330
\pinlabel $z_{1}$ at 430 20
\pinlabel $z_{0}$ at 260 420
\pinlabel $\gamma _{0}$ at 320 330 
\pinlabel $\gamma _{1}$ at 390 100
\pinlabel $\alpha _{0}$ at 230 280
\pinlabel $\alpha _{1}$ at 280 70
\endlabellist
\begin{center}
\includegraphics[scale=0.4]{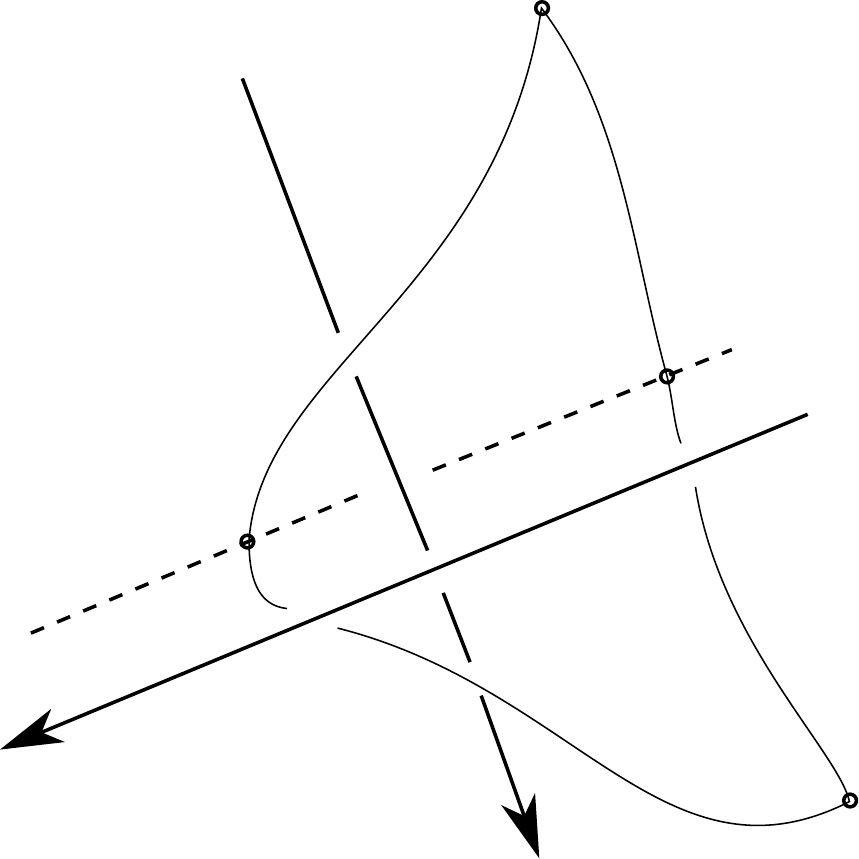}
\caption{The invariance of $\widetilde{f}$ - change of initial point}
\label{fig:slika8}
\end{center}
\end{figure}

Secondly, suppose that the initial point of $\gamma _{i}$ is at the semiarc $a$, while the initial point of $\alpha _{i}$ is at the semiarc $d$ where $a\dow{c}=d$ (see Figure \ref{fig:slika8}). Since $\overline{f}$ preserves the crossing relations, we have $f(a)\dow{f(c)}=f(d)$. Since $\overline{f}$ preserves the relations $R_{a,b,c}$, it follows by Proposition \ref{prop1} that any up-operation on $\overline{f}(A(D))$ commutes with any down-operation. Write $\widetilde{f}([\gamma _{0},\gamma _{1}])=f(a)\up{w_{1}}\dow{w_{2}}$ and it follows that $\widetilde{f}([\alpha _{0},\alpha _{1}])=f(d)\up{w_{1}}\dol{\overline{f}(c\up{a})}\dow{w_{2}}=f(a)\dow{f(c)}\dol{f(c)\up{f(a)}}\up{w_{1}}\dow{w_{2}}=f(a)\up{w_{1}}\dow{w_{2}}=\widetilde{f}([\gamma _{0},\gamma _{1}])$. 

For the two remaining cases, we prove the invariance in a similar way. 

\item The arc $\gamma _{0}$, overcrossed by the same semiarc $b$ twice, homotopes to an arc $\alpha _{0}$ that is not crossed by $b$ (or the arc $\gamma _{1}$, overcrossing the same semiarc $b$ twice, homotopes to an arc $\alpha _{1}$ that does not cross $b$). 
\begin{figure}[h!]
\labellist
\normalsize \hair 2pt
\pinlabel $a$ at 210 120
\pinlabel $b$ at 240 280
\pinlabel $z_{1}$ at 260 -15
\pinlabel $z_{0}$ at 230 435
\pinlabel $\gamma _{0}$ at 130 380 
\pinlabel $\gamma _{1}$ at 160 50
\pinlabel $\alpha _{0}$ at 300 340
\pinlabel $\alpha _{1}$ at 340 100
\endlabellist
\begin{center}
\includegraphics[scale=0.4]{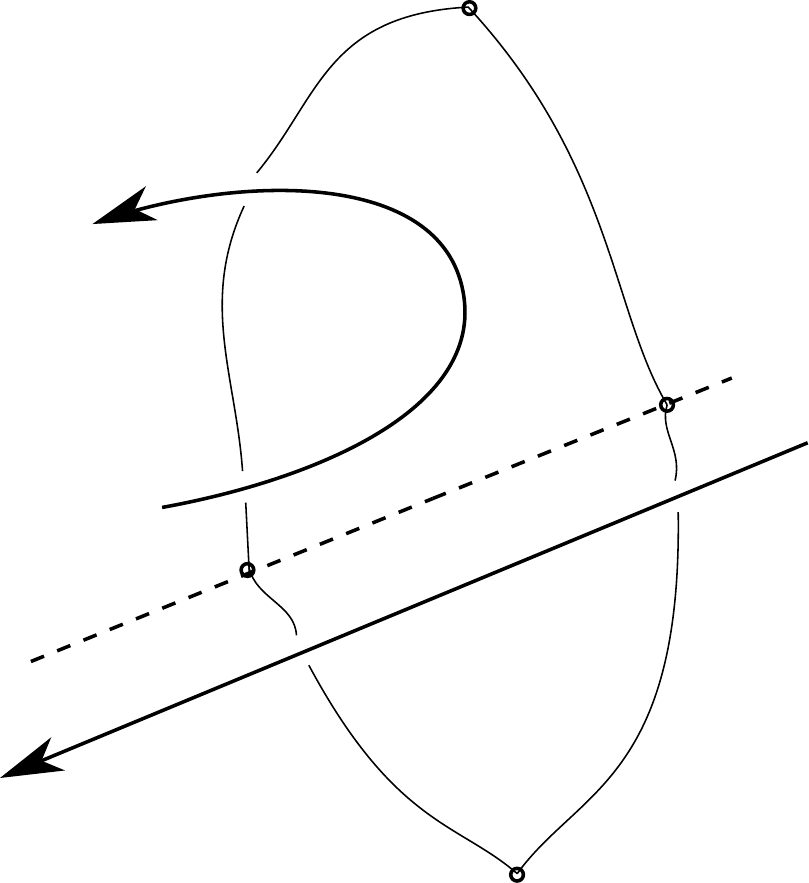}
\caption{The invariance of $\widetilde{f}$ under a homotopy - first case of (b)}
\label{fig:slika9}
\end{center}
\end{figure}

For the first case, see Figure \ref{fig:slika9}. We have $\widetilde{f}([\gamma _{0},\gamma _{1}])=f(a)\up{w_{1}}\up{f(b)}\upl{f(b)}\up{w_{2}}\dow{w_{3}}$ and $\widetilde{f}([\alpha _{0},\alpha _{1}])=f(a)\up{w_{1}}\up{w_{2}}\dow{w_{3}}$. Since $\overline{f}$ preserves the relations $R_{a,b,c}$, it follows by Proposition \ref{prop1} that $\widetilde{f}([\gamma _{0},\gamma _{1}])=\widetilde{f}([\alpha _{0},\alpha _{1}])$. 

\begin{figure}[h!]
\labellist
\normalsize \hair 2pt
\pinlabel $a$ at 210 310
\pinlabel $b$ at 230 130
\pinlabel $z_{1}$ at 235 -15
\pinlabel $z_{0}$ at 245 435
\pinlabel $\gamma _{0}$ at 170 380 
\pinlabel $\gamma _{1}$ at 110 130
\pinlabel $\alpha _{0}$ at 320 340
\pinlabel $\alpha _{1}$ at 315 100
\endlabellist
\begin{center}
\includegraphics[scale=0.4]{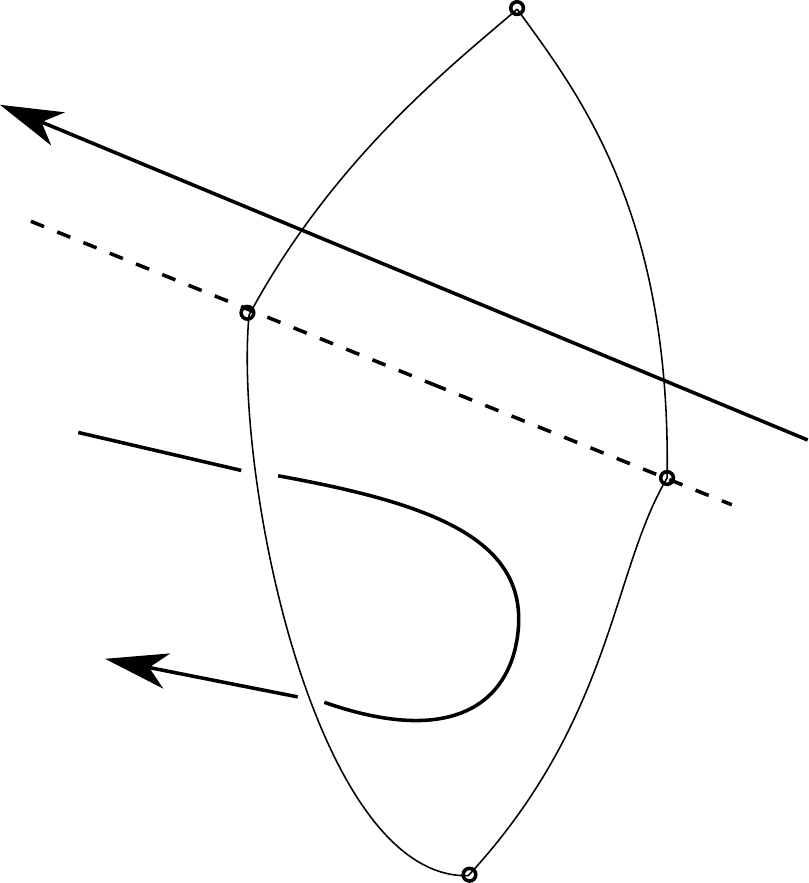}
\caption{The invariance of $\widetilde{f}$ under a homotopy - second case of (b)}
\label{fig:slika10}
\end{center}
\end{figure}
For the second case, see Figure \ref{fig:slika10}. We have $\widetilde{f}([\gamma _{0},\gamma _{1}])=f(a)\up{w_{1}}\dow{w_{2}}\dow{f(b)}\dol{f(b)}\dow{w_{3}}$ and $\widetilde{f}([\alpha _{0},\alpha _{1}])=f(a)\up{w_{1}}\dow{w_{2}}\dow{w_{3}}$. Since $\overline{f}$ preserves the relations $R_{a,b,c}$, it follows by Proposition \ref{prop1} that $\widetilde{f}([\gamma _{0},\gamma _{1}])=\widetilde{f}([\alpha _{0},\alpha _{1}])$.

\item $\gamma _{0}$ passes under a crossing between two semiarcs (or $\gamma _{1}$ passes over a crossing between two semiarcs). 
\begin{figure}[h!]
\labellist
\normalsize \hair 2pt
\pinlabel $a$ at 200 120
\pinlabel $b$ at 20 260
\pinlabel $c$ at 20 435
\pinlabel $c\dow{b}$ at 390 280 
\pinlabel $b\up{c}$ at 380 430
\pinlabel $z_{1}$ at 200 -15
\pinlabel $z_{0}$ at 200 585
\pinlabel $\gamma _{0}$ at 110 480 
\pinlabel $\gamma _{1}$ at 85 60
\pinlabel $\alpha _{0}$ at 280 480
\pinlabel $\alpha _{1}$ at 315 60
\endlabellist
\begin{center}
\includegraphics[scale=0.4]{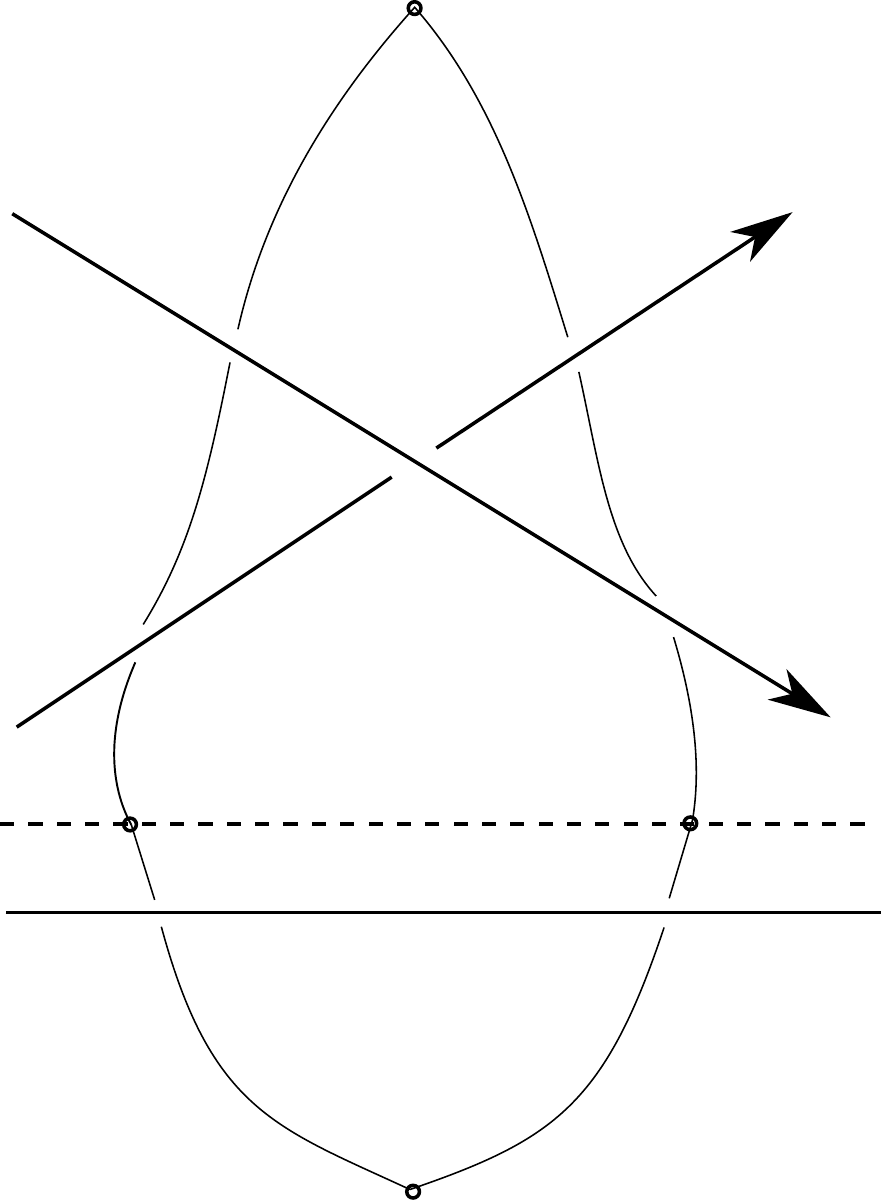}
\caption{The invariance of $\widetilde{f}$ under a homotopy - first case of (c)}
\label{fig:slika11}
\end{center}
\end{figure}

For the first case, see Figure \ref{fig:slika11}. We write $\widetilde{f}([\gamma _{0},\gamma _{1}])=f(a)\up{w_{1}}\up{f(b)}\up{f(c)}\up{w_{2}}\dow{w_{3}}$ and $\tilde{f}([\alpha _{0},\alpha _{1}])=f(a)\up{w_{1}}\up{f(c)\dow{f(b)}}\up{f(b)\up{f(c)}}\up{w_{2}}\dow{w_{3}}$. Since $\overline{f}$ preserves the relations $R_{a,b,c}$, we may use the first equality of the 3.Biquandle axiom to compute \begin{xalignat*}{1}
& f(a)\up{w_{1}}\up{f(c)\dow{f(b)}}\up{f(b)\up{f(c)}}=f(a)\up{w_{1}}\up{f(b)}\up{f(c)}
\end{xalignat*} and therefore $\widetilde{f}([\gamma _{0},\gamma _{1}])=\widetilde{f}([\alpha _{0},\alpha _{1}])$. The remaining cases are settled in a similar way. \\

To show that $\widetilde{f}$ is a biquandle homomorphism, choose two elements $[\alpha _{0},\alpha _{1}], [\beta _{0},\beta _{1}]\in \bi $. Let $\widetilde{f}[\alpha _{0},\alpha _{1}]=f(a)\up{w_{1}}\dow{w_{2}}$ and $\widetilde{f}[\beta _{0},\beta _{1}]=f(b)\up{z_{1}}\dow{z_{2}}$. Using Proposition \ref{prop1}, we calculate 
\begin{xalignat*}{1}
& \widetilde{f}\left ([\alpha _{0},\alpha _{1}]^{[\beta _{0},\beta _{1}]}\right )=\widetilde{f}[\alpha _{0}\overline{\beta }_{0}m_{\beta _{0}(0)}\beta_{0},\alpha _{1}]=f(a)\up{w_{1}}\upl{z_{1}}\up{f(b)}\up{z_{1}}\dow{w_{2}}=\\
& =f(a)\up{w_{1}}\dow{w_{2}}\upl{z_{1}}\up{f(b)}\up{z_{1}}=\widetilde{f}[\alpha _{0},\alpha _{1}]\up{f(b)\up{z_{1}}}=\widetilde{f}[\alpha _{0},\alpha _{1}]\up{\widetilde{f}[\beta _{0},\beta _{1}]}\\
& \widetilde{f}\left ([\alpha _{0},\alpha _{1}]_{[\beta _{0},\beta _{1}]}\right )=\widetilde{f}[\alpha _{0},\alpha _{1}\overline{\beta }_{1}m_{\beta _{1}(0)}\beta_{1}]=f(a)\up{w_{1}}\dow{w_{2}}\dol{z_{2}}\dow{f(b)}\dow{z_{2}}=\\
& =\widetilde{f}[\alpha _{0},\alpha _{1}]\dow{f(b)\dow{z_{2}}}=\widetilde{f}[\alpha _{0},\alpha _{1}]\dow{f(b)\dow{z_{2}}\up{z_{1}}}=\widetilde{f}[\alpha _{0},\alpha _{1}]\dow{\widetilde{f}[\beta _{0},\beta _{1}]}\;,
\end{xalignat*} thus $\widetilde{f}$ is indeed a biquandle homomorphism.

To prove uniqueness of $\widetilde{f}$, observe that by Corollary \ref{cor1}, any element of $\bi $ can be written as $[\gamma _{0},\gamma _{1}]=j(a)\up{\overline{j}(w_{1})}\dow{\overline{j}(w_{2})}$, where $a\in A(D)$ and $w_{1},w_{2}$ are elements of the free group, generated by $A(D)$. If $g\colon \bi \to Y$ is any biquandle homomorphism for which $g\circ j=f$, then we have $$g[\gamma _{0},\gamma _{1}]=f[a_{0},a_{1}]\up{\overline{f}(w_{1})}\dow{\overline{f}(w_{2})}=\widetilde{f}[\gamma _{0},\gamma _{1}]\;.$$
\end{enumerate}

\end{enumerate}
\end{proof}

\begin{corollary} For any link $L$, the topological biquandle $\bi $ is a quotient of its fundamental biquandle $BQ(L)$. 
\end{corollary}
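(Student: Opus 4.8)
The plan is to produce a \emph{surjective} biquandle homomorphism $BQ(L)\to\bi$, since this is exactly what it means for $\bi$ to be a quotient of $BQ(L)$. Fix a diagram $D$ of $L$. Recall that $BQ(L)$ is the biquandle presented by $\langle A(D)\,|\,\text{crossing relations for each }c\in C(D)\rangle$, whereas the theorem just proved presents $\bi$ by $\langle A(D)\,|\,\text{crossing relations for each }c\in C(D),\ R_{a,b,c}\text{ for each }a,b,c\in A(D)\rangle$. Both presentations use the generating set $A(D)$ and the presentation of $\bi$ only adds relations, so intuitively $\bi$ is $BQ(L)$ with the extra relations $R_{a,b,c}$ imposed; the corollary amounts to making this precise.

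To do so, I would use the universal property built into the notion of presentation. Let $j\colon A(D)\to\bi$ be the map used in the proof of the theorem, and let $\overline{j}\colon F_{BQ}(A(D))\to\bi$ be its canonical extension; part~(1) of that proof shows, among other things, that $\overline{j}$ sends every crossing relation into the diagonal $\Delta_{\bi}$. Hence clause~(2) of the definition of a presentation, applied to the presentation $\langle A(D)\,|\,\text{crossing relations}\rangle$ of $BQ(L)$ with target biquandle $Y=\bi$ and map $f=j$, produces a unique biquandle homomorphism $\Phi\colon BQ(L)\to\bi$ with $\Phi\circ j_{BQ}=j$, where $j_{BQ}\colon A(D)\to BQ(L)$ denotes the structure map of the presentation of $BQ(L)$. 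In other words, $\Phi$ is the homomorphism induced by the identity on the shared generators.

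It remains to check that $\Phi$ is onto. Since $\langle A(D)\,|\,\dots\rangle$ is a presentation of $\bi$, the subset $j(A(D))$ generates $\bi$; indeed, by Corollary~\ref{cor1} every element of $\bi$ equals $a\up{w_1}\dow{w_2}$ for some $a\in j(A(D))$ and words $w_1,w_2\in F(A(D))$. Now $\Phi(BQ(L))$ is a sub-biquandle of $\bi$ — it is closed under the up- and down-operations because $\Phi$ is a homomorphism, and under the bar operations by Lemma~\ref{lemma01} — and it contains $\Phi\bigl(j_{BQ}(A(D))\bigr)=j(A(D))$. A sub-biquandle containing a generating set is the whole biquandle, so $\Phi(BQ(L))=\bi$. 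Thus $\Phi$ is the desired surjection and $\bi$ is a quotient of $BQ(L)$.

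I do not expect a real obstacle here: the substance is entirely contained in the presentation theorem, and the corollary is the routine observation that adjoining relations to a presentation yields a quotient. The only points needing attention are to use the universal property in the correct direction — the map $j$ respects the crossing relations in $\bi$, so it factors through $BQ(L)$ rather than conversely — and to remember to include the bar operations when verifying that $\Phi(BQ(L))$ is a sub-biquandle, which is precisely what Lemma~\ref{lemma01} provides.
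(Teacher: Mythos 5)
Your argument is correct and is exactly the reasoning the paper leaves implicit: the corollary is stated without proof as an immediate consequence of the presentation theorem, since the presentation of $\bi $ is that of $BQ(L)$ with the relations $R_{a,b,c}$ adjoined. Your write-up simply makes this precise via the universal property of the presentation of $BQ(L)$ and the fact that $j(A(D))$ generates $\bi $ (Corollary~\ref{cor1}), so it takes essentially the same route.
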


\begin{corollary}The topological biquandle is a link invariant. 
\end{corollary}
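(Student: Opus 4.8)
The plan is to take seriously the fact that $\bi$ was built using nothing but the pair $(S^3,L)$, the auxiliary data being a regular neighbourhood $N_{L}$, a $3$-ball $B^{3}$ with $N_{L}$ in its interior, and two antipodal points $z_{0},z_{1}\in\partial B^{3}$. So I would prove the corollary in two steps: first, that the isomorphism type of $\bi$ does not depend on any of these choices; second, that an orientation-preserving homeomorphism of $S^{3}$ carrying $L$ to another link $L'$ (and the orientation of $L$ to that of $L'$) induces a biquandle isomorphism $\bi\to\widehat{\mathcal{B}}_{L'}$. Together these say precisely that $\bi$ is an invariant of the oriented link type.

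For the second step, given such a homeomorphism $h\colon(S^{3},L)\to(S^{3},L')$, one simply transports the construction data: $h(N_{L})$ is a regular neighbourhood of $L'$, $h(B^{3})$ a $3$-ball containing it, and $h(z_{0}),h(z_{1})$ a pair of antipodal points on its boundary. Since $h$ preserves the orientations of $S^{3}$ and of the link, it preserves the right-hand-rule orientation of the normal bundle and hence sends positive meridians to positive meridians; it commutes with path reversal and concatenation and carries $\partial N_{L}$ to $\partial N_{L'}$ and $E_{L}$ to $E_{L'}$. Thus $h$ induces a bijection $\mathcal{B}_{L}\to\mathcal{B}_{L'}$ respecting the equivalence relation $\sim$ and intertwining the maps $p_{0},p_{1}$; it descends to a bijection $\bi\to\widehat{\mathcal{B}}_{L'}$, which, because the up- and down-operations are expressed entirely through concatenation with the loops $p_{i}$, is a biquandle isomorphism.

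For the first step I would use that $E_{L}$ is path-connected (the link is non-empty) and that regular neighbourhoods of $L$ in $S^{3}$ are unique up to an ambient isotopy stationary on $L$, so $E_{L}$ is well defined up to such an isotopy; the exterior $E_{L}$ is moreover insensitive to the choice of $B^{3}$, which serves only to provide the basepoints. Given two admissible basepoint pairs $(z_{0},z_{1})$ and $(z_{0}',z_{1}')$, choose paths $\delta_{0}$ from $z_{0}$ to $z_{0}'$ and $\delta_{1}$ from $z_{1}$ to $z_{1}'$ in $E_{L}$; the assignment $(a_{0},a_{1})\mapsto(a_{0}\cdot\delta_{0},\,a_{1}\cdot\delta_{1})$ induces the comparison map, with inverse given by the reversed paths, and combining this with the naturality of the previous step applied to $\mathrm{id}_{(S^{3},L)}$ gives the desired independence. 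The one slightly delicate point — the main obstacle — is verifying that this comparison map respects the biquandle operations: since the up-operation alters the first path by $p_{0}$ of the second element and the down-operation alters the second path by $p_{1}$, one must check that conjugating the relevant meridian loops by $\delta_{0}$, $\delta_{1}$ reproduces exactly the loops $p_{0}',p_{1}'$ computed with the new basepoints, so that the two concatenations agree up to the homotopies permitted in the definition of $\sim$. Everything else is formal; alternatively one could deduce the statement from the diagram presentation of the previous theorem together with the invariance of that presentation under Reidemeister moves, at the cost of considerably more combinatorial bookkeeping with the relations $R_{a,b,c}$.
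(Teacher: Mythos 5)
Your argument is correct, but it is not the route the paper takes. The paper offers no proof at all: the statement is presented as an immediate consequence of the presentation theorem, the intended logic being that $\bi$ has just been exhibited as the quotient of the fundamental biquandle $BQ(L)$ --- itself a known link invariant --- by the relations $R_{a,b,c}$. You instead prove invariance directly from the topological definition of Section 2: independence of the choice of regular neighbourhood, of the ball $B^{3}$, and of the basepoints, together with functoriality under orientation-preserving homeomorphisms of pairs. Your version is more laborious but arguably more complete, since the corollary-of-the-presentation reading still leaves something unchecked (the generating set $A(D)$ and the family of relations $R_{a,b,c}$ both change under Reidemeister moves, so one must either verify invariance of the enlarged presentation move by move or fall back on the diagram-independence of the topological construction --- which is precisely what you establish). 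Two small remarks: your basepoint-change map $(a_{0},a_{1})\mapsto(a_{0}\cdot\delta_{0},a_{1}\cdot\delta_{1})$ depends on the chosen paths $\delta_{0},\delta_{1}$, but since only the isomorphism type is at stake this is harmless; and the computation you single out as delicate --- that conjugating the meridian loops $p_{i}$ by $\delta_{i}$ is absorbed, up to the homotopies allowed in the definition of $\sim$, by the concatenations defining the up- and down-operations --- is exactly the right thing to check and goes through. A side benefit of your approach is that it also delivers the functoriality of $L\mapsto\bi$ asserted in the introduction, which the presentation route does not give directly.
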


\begin{example} \label{ex1} Consider the link $L=L6n1$ in the Thistlethwaite link table, whose diagram is depicted in Figure \ref{fig:slika12}. 
\begin{figure}[h!]
\labellist
\normalsize \hair 2pt
\pinlabel $a$ at 150 240
\pinlabel $b$ at 200 5
\pinlabel $c$ at 450 250 
\pinlabel $d$ at 300 300
\pinlabel $e$ at 390 120
\pinlabel $f$ at 620 210
\pinlabel $g$ at 250 330
\pinlabel $h$ at 260 200
\pinlabel $i$ at 240 110
\pinlabel $j$ at 360 200
\pinlabel $k$ at 365 330
\pinlabel $l$ at -5 210 
\endlabellist
\begin{center}
\includegraphics[scale=0.4]{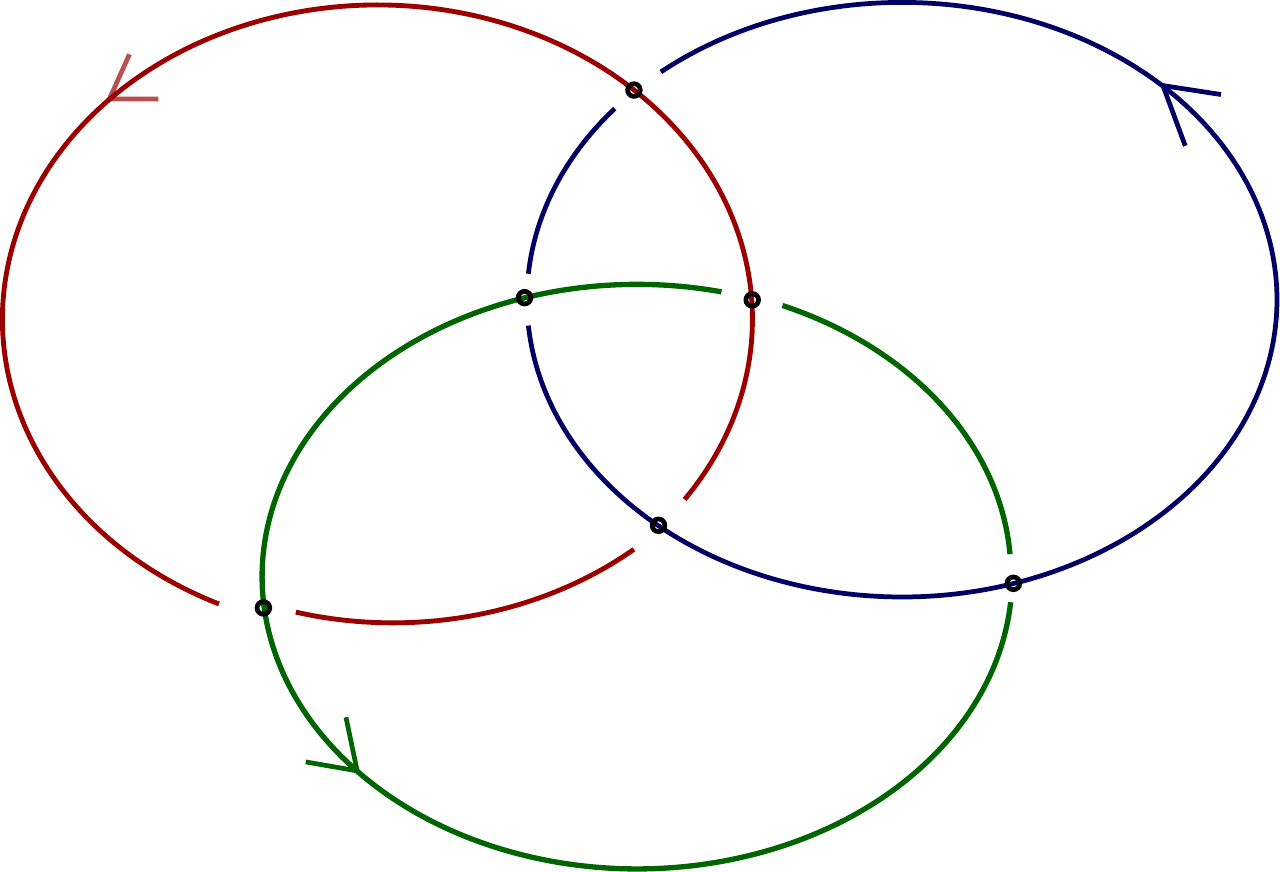}
\caption{A diagram of the link $L6n1$ from Example \ref{ex1}}
\label{fig:slika12}
\end{center}
\end{figure}
Denoting the semiarcs of the diagram as shown in the Figure \ref{fig:slika12}, the fundamental biquandle of $L$ is given by the presentation
\begin{xalignat*}{1}
& BQ(L)=\langle a,b,c,d,e,f,g,h,i,j,k,l\, |\, l\up a=i, a\dow l=b, f\up k=g, k\dow f=l, g\up d=h, \\
& d\dow g=a, c\up j=d, j\dow c=k, i\up h=j, h\dow i=e, b\up e=c, e\dow b=f\rangle \;,
\end{xalignat*} that reduces to
\begin{xalignat*}{1}
& BQ(L)=\langle b,f,l\, |\,b\up{f\dol{b}}\up{l\dol{f}\dol{b\up{f\dol{b}}}}\dow{f\up{l\dol{f}}}\dow{l}=b,\\
&  f\up{l\dol{f}}\up{b\dol{l}\dol{f\up{l\dol{f}}}}\dow{l\up{b\dol{l}}}\dow{b}=f,  l\up{b\dol{l}}\up{f\dol{b}\dol{l\up{b\dol{l}}}}\dow{b\up{f\dol{b}}}\dow{f}=l \rangle \;.
\end{xalignat*}
The topological biquandle $\bi $ is given by the presentation 
\begin{xalignat*}{1}
& \bi =\langle a,b,c,d,e,f,g,h,i,j,k,l\, |\, l\up a=i, a\dow l=b, f\up k=g, k\dow f=l, g\up d=h, \\
& d\dow g=a, c\up j=d, j\dow c=k, i\up h=j, h\dow i=e, b\up e=c, e\dow b=f, R\rangle \;,
\end{xalignat*}
where $R$ denotes all relations $R_{x,y,z}$ for $x,y,z\in \{a,b,c,d,e,f,g,h,i,j,k,l\}$. These relations include: $x\up{y\dol{x}}=x\up{y}$, $x\up{y\dol{z}\dol{w\up{z}}}=x\up{y}$ and $x\dow{y\up{z}}=x\dow{y}$ for every $x,y,z,w\in \{b,f,l\}$. 
 Since none of these new relations is implied from the relations in the presentation of $BQ(L)$, it follows that the topological biquandle $\bi $ is a quotient of the fundamental biquandle $BQ(L)$. The presentation of the topological biquandle thus reduces to
\begin{xalignat*}{1}
& \bi =\langle b,f,l\, |\, b\up f\up l\dow f\dow l=b, f\up l\up b\dow l\dow b=f,  l\up b\up f\dow b\dow f=l, R\rangle \;.
\end{xalignat*}  
\end{example}

\begin{remark}A presentation of the topological biquandle $\bi $ is obtained from a presentation of the fundamental biquandle $BQ(L)$ by adding relations 
\begin{xalignat*}{1}
R_{a,b,c}=\left \{ a\up{b\dow{c}}=a\up{b},\, a\upl{b\dow{c}}=a\upl{b},\, a\dow{b\up{c}}=a\dow{b},\, a\dol{b\up{c}}=a\dol{b}\right \}
\end{xalignat*}
 for every ordered triple of generators $(a,b,c)$. Seeing $\bi $ as a subbiquandle of the fundamental biquandle $BQ(L)$, we may talk about the corresponding ''sections''. For any $a\in BQ(L)$, the section $\bi a$ is given as $\bi a=\{x\up a,x\dow a|\, x\in \bi \}$. The quotient set $BQ(L)/\bi $ is generated by $$BQ(L)/\bi =\left \langle \bi a\up b,\, \bi a\dow b,\, \bi a\upl b,\, \bi a\dol b\, |\, a,b\in BQ(L)\right \rangle \;.$$ Denoting by $n$ the number of generators of $BQ(L)$, the quotient set $BQ(L)/\bi $ has $4n^{2}$ generators, which indicates the ''index'' of the topological biquandle inside the fundamental biquandle. In Example \ref{ex1}, the quotient $BQ(L)/\bi $ has $36$ generators. 
\end{remark}

One might question the need for the topological biquandle, when the fundamental quandle is already a complete invariant of knots up to inversion. In a more sophisticated study of links (e.g. virtual links), however, we sometimes need to combine two or more different link invariants to yield a stronger invariant. Some examples of this are the quantum enhancements using biquandles, see \cite{NEL1, NEL2, NEL3, MAN1}. In the study of virtual links, Manturov introduced the concept of parity \cite{MAN2}, that induces a function on the set of crossings of any virtual link diagram. Parity allows constructions of new link invariants and also improvement of the existing invariants (e.g. Kauffman bracket). As was shown in \cite[Example 2.3]{MAN1}, a parity of knots may be induced by a certain coloring of the fundamental biquandle of the knot. It might be possible to define other parities of virtual knots using the fundamental or topological biquandle. 

The topological biquandle may just as well be defined for links in other 3-manifolds, virtual links, or higher-dimensional links, and it might lead to interesting new invariants.

\end{section}

\section*{Acknowledgements}The author was supported by the Slovenian Research Agency grant N1-0083.

\end{document}